\DeclareMathOperator{\Bal}{Bal}
\DeclareMathOperator{\Cay}{Cay}
\DeclareMathOperator{\Prim}{Prim}
\DeclareMathOperator{\id}{id}
\DeclareMathOperator{\Par}{Par}
\DeclareMathOperator{\Der}{Der}
\DeclareMathOperator{\Img}{Im}
\newcommand{\LL}{\mathbb{L}}
\newcommand{\BB}{\mathbb{B}}
\DeclareMathOperator{\Alt}{Alt}
\DeclareMathOperator{\AltC}{\overline{\Alt}}
\newcommand{\Cyc}{\mathcal{C}}
\newcommand{\Sym}{\mathcal{S}}
\newcommand{\Even}[1]{#1_{\mathrm{even}}}
\newcommand{\Odd}[1]{#1_{\mathrm{odd}}}
\newcommand{\std}{\mathrm{std}}
\newcommand{\ltrmin}{\textsc{lmin}}
\newcommand{\wltrmin}{\textsc{wlmin}}
\DeclareMathOperator{\filling}{\textsc{fill}}
\newcommand{\wking}{\scalerel*{\WhiteKingOnWhite}{Xg}}
\newcommand{\bking}{\scalerel*{\BlackKingOnWhite}{Xg}}
\newcommand{\wrook}{\scalerel*{\WhiteRookOnWhite}{Xg}}
\newcommand{\brook}{\scalerel*{\BlackRookOnWhite}{Xg}}
\newcommand{\wknight}{\scalerel*{\WhiteKnightOnWhite}{Xg}}
\newcommand{\bknight}{\scalerel*{\BlackKnightOnWhite}{Xg}}
\newcommand{\wpawn}{\scalerel*{\WhitePawnOnWhite}{Xg}}
\newcommand{\bpawn}{\scalerel*{\BlackPawnOnWhite}{Xg}}
\definecolor{orange}{RGB}{255,102,0}
\definecolor{ggreen}{RGB}{0,153,0}
\definecolor{darkblue}{RGB}{0,0,255}
\definecolor{purple}{RGB}{153,51,255}
\definecolor{turq}{RGB}{72,209,204}
\definecolor{gray}{RGB}{220,220,220}
\definecolor{orange2}{RGB}{255,100,0}
\definecolor{purple2}{RGB}{159,51,250}
\definecolor{rred}{rgb}{0.9, 0.17, 0.31}
\definecolor{naugreen}{cmyk}{.43,0,.34,.38}
\definecolor{naublue}{cmyk}{1,.72,0,.32}
\definecolor{mediterranean}{cmyk}{.67,0,.08,.3}
\definecolor{rose}{cmyk}{0,1.00,.20,0}
\definecolor{darkorchid}{cmyk}{.6,.9,0,.05}
\definecolor{butterfly}{cmyk}{.95,.59,0,.10}
\definecolor{springgreen}{cmyk}{1.00,0,.70,.02}
\definecolor{darkred}{cmyk}{0,1,1,.5}
\definecolor{nectarine}{cmyk}{0,0.70,1.00,0}
\definecolor{icyblue}{cmyk}{.84,.25,0,.06}
\definecolor{manatee}{rgb}{0.59, 0.6, 0.67}
\theoremstyle{definition}
\newtheorem{theorem}{Theorem}[section]
\newtheorem{corollary}[theorem]{Corollary}
\newtheorem{lemma}[theorem]{Lemma}
\newtheorem{conjecture}[theorem]{Conjecture}
\newtheorem{proposition}[theorem]{Proposition}
\newtheorem{example}[theorem]{Example}
\begin{document}

\title{Pattern-avoiding Cayley permutations\\ via combinatorial species}

\author[1]{Giulio Cerbai\thanks{G.C. is member of the Gruppo
Nazionale Calcolo Scientifico--Istituto Nazionale di Alta
Matematica (GNCS-INdAM).}}
\author[1]{Anders Claesson}
\author[2]{Dana C.~Ernst}
\author[2]{Hannah Golab}
\affil[1]{Department of Mathematics, University of Iceland,
Reykjavik, Iceland,
\texttt{akc@hi.is},
\texttt{giulio@hi.is}.
}
\affil[2]{Department of Mathematics and Statistics,
Northern Arizona University, 
Flagstaff, AZ,
\texttt{Dana.Ernst@nau.edu},
\texttt{Hannah.Golab@nau.edu}.
}
\date{}

\maketitle
\begin{abstract}
A Cayley permutation is a word of positive integers such that if a letter
appears in this word, then all positive integers smaller than that letter also appear.
We initiate a systematic study of pattern avoidance on Cayley permutations
adopting a combinatorial species approach. Our methods lead to species
equations, generating series, and counting formulas for Cayley permutations
avoiding any pattern of length at most three.
We also introduce the species of primitive structures as a generalization of Cayley permutations with no ``flat steps''. Finally, we explore 
various notions of Wilf equivalence arising in this context.
\end{abstract}

\section{Introduction}
\thispagestyle{empty} 

A permutation $w$ (written in one-line notation) is said to contain a permutation 
$p$ as a pattern if some subsequence of the entries of $w$ has the same relative 
order as all of the entries of $p$. If $w$ does not contain $p$, then $w$ is said 
to avoid $p$. One of the first notable results in the field of permutation patterns 
was obtained by MacMahon~\cite{MacMahon1915} in 1915 when he proved that the Catalan 
numbers enumerate the 123-avoiding permutations. The study of permutation patterns 
began receiving focused attention following Knuth's introduction of stack-sorting in 
1969~\cite{Knuth1969}. Knuth proved that a permutation can be sorted by a stack if 
and only if it avoids the pattern 231. The Catalan numbers also enumerate the 
stack-sortable permutations. The first explicit systematic treatment of pattern 
avoidance was conducted by Simion and Schmidt~\cite{simionschmidt}.
In subsequent years, the notion of pattern avoidance has been extended to 
numerous combinatorial objects, including  
set partitions~\cite{Duncan2011, Jelinek2013, Sagan2006},
multiset permutations~\cite{Heubach2006, Savage2006}, 
compositions~\cite{Heubach2006, k-ary, Savage2006},
ascent sequences~\cite{Conway2022,Duncan2011,Egge2022},
and modified ascent sequences~\cite{modasc,modasc2}.
We refer the reader to the books by Bóna~\cite{Bona2022} and Kitaev~\cite{Kitaev2011}
for a comprehensive summary of pattern avoidance in permutations and words.

In this paper, we study pattern avoidance in the setting of Cayley
permutations, which were so named by Mor and Fraenkel in
1983~\cite{MorFraenkel1984}. A Cayley permutation $w$ is a word of
positive integers such that if a letter $b$ appears in $w$ then all
positive integers $a<b$ also appear in $w$. Let us provide some
background on why these words may be interesting and how they got their
name. In a short article from 1895, Cayley~\cite{Cayley1859} counted a
loosely-defined class of trees, and the following is an interpretation
of their definition. The class in question consists of unlabeled rooted
plane trees with a fixed number of leaves. In addition, all leaves are
equidistant from the root, and the number of nodes at distance $i+1$
from the root is either zero or larger than the number of nodes at
distance $i$ from the root. For instance, these are all such trees with three
leaves:
\[
\begin{tikzpicture}[
  scale=0.8,
  level distance=4ex,
  level 1/.style={sibling distance=2em},
  ]
  \begin{scope}[every node/.style = {draw, circle, inner sep=1.6pt}]
  \node {}
    child {node (a) {}}
    child {node (b) {}}
    child {node (c) {}};
  \end{scope}
  \path (a) -- (b) node[midway, yshift=-2ex] {$\scriptstyle 1$};
  \path (b) -- (c) node[midway, yshift=-2ex] {$\scriptstyle 1$};
\end{tikzpicture}\qquad\quad
\begin{tikzpicture}[
  scale=0.8,
  level distance=4ex,
  level 1/.style={sibling distance=3em},
  level 2/.style={sibling distance=2em}
  ]
  \begin{scope}[every node/.style = {draw, circle, inner sep=1.6pt}]
  \node {}
    child {node {}
      child {node (a) {}}
      child {node (b) {}}
    }
    child {node {}
      child {node (c) {}}
    };
  \end{scope}
  \path (a) -- (b) node[midway, yshift=-2ex] {$\scriptstyle 1$};
  \path (b) -- (c) node[midway, yshift=-2ex] {$\scriptstyle 2$};
\end{tikzpicture}\qquad\quad
\begin{tikzpicture}[
  scale=0.8,
  level distance=4ex,
  level 1/.style={sibling distance=3em},
  level 2/.style={sibling distance=2em}
  ]
  \begin{scope}[every node/.style = {draw, circle, inner sep=1.6pt}]
  \node {}
    child {node {}
      child {node (a) {}}
    }
    child {node {}
      child {node (b) {}}
      child {node (c) {}}
    };
  \end{scope}
  \path (a) -- (b) node[midway, yshift=-2ex] {$\scriptstyle 2$};
  \path (b) -- (c) node[midway, yshift=-2ex] {$\scriptstyle 1$};
\end{tikzpicture}\vspace{-1.5ex}
\]
Written between each pair of adjacent leaves is the shortest
distance to a common ancestor of the two leaves. The resulting words are
exactly the Cayley permutations of length two: $11$, $12$, and
$21$. This connection between the trees and the words seems to have been first noted
by MacMahon~\cite{MacMahon1890}, though he formulated it in terms of
compositions of a ``multipartite number''.
Cayley derived the exponential generating series
\[
  \frac{1}{2-e^x} \,=\,
    1 + x + 3\cdot\frac{x^2}{2!} + 13\cdot\frac{x^3}{3!}
    + 75\cdot\frac{x^4}{4!} + 541\cdot\frac{x^5}{5!} + \cdots
\]
for the number of trees with $n+1$ leaves, which is the same as the number
of Cayley permutations of length $n$. The coefficients $1$, $1$,
$3$, $13$, $75$, $541$, $\dots$ of this series are known as the Fubini
numbers, and they appear as entry \href{https://oeis.org/A000670}{A000670}
in the Online Encyclopedia of Integer Sequences (OEIS)~\cite{oeis}.

There are quite a few authors building on the work of Mor and
Fraenkel~\cite{MorFraenkel1984}. They include
Fraenkel and Mor~\cite{FM1983},
Fraenkel~\cite{Fraenkel1985},
Bernstein and Sloane~\cite{BS1995},
Göbel~\cite{Gobel1997},
Hoffman~\cite{Hoffman2001, Hoffman2016},
Baril~\cite{Baril2003},
Jacques, Wong, and Woo~\cite{Jacques-etal},
M\"utze~\cite{Mutze2023},
Cerbai~\cite{sortcay,modasc,modasc2},
Cerbai and Claesson~\cite{CC:fish, CC:tpb, CC:caypol},
and Cerbai, Claesson and Sagan~\cite{modasc3}.
This list is not complete; it is not even complete when restricting to
authors using the Cayley permutation terminology. It is even less
complete when the multitude of different guises that Cayley permutations have
appeared under are taken into account. Synonyms for Cayley permutation include
\emph{packed word}~\cite{FNJ2014, KR2022, Marberg2020, Marberg2021},
\emph{surjective word}~\cite{Hazewinkel2008},
\emph{Fubini word}~\cite{BilleyRyan, Rhoades2022, Wilson2018}, and
\emph{initial word}~\cite{PP2016}.

Cayley permutations may be seen as representatives for equivalence
classes of words modulo order isomorphism. For instance, if we take the
ambient space of words to be $\{1,2,3\}^3$, then the equivalence class
represented by $111$ is $\{111,222,333\}$, while $112$ represents $\{112, 223\}$.
This hints at the view that Cayley permutations simply are \emph{patterns}~\cite{Savage2006}
or (in statistics) \emph{generalized ordinal patterns}~\cite{SF2022}.

A \emph{weak order}~\cite{Bailey1998, Birmajer-etal, HH2013, Jacques-etal},
also called a
\emph{preferential arrangement}~\cite{AUP2013, BZG2016, Gross1962, NBCC2020, Pippenger2010}
or a \emph{race with ties}~\cite{Mendelson1982},
is a way to order
objects where ties are allowed: it is a reflexive, transitive, and
total binary relation. To each Cayley permutation $w=w_1w_2\ldots w_n$
there is an associated binary relation $\preceq$ defined by
$i\preceq j\Leftrightarrow w_i\leq w_j$. This sets up a one-to-one
correspondence between Cayley permutations and weak orders.

Cayley permutations encode \emph{ballots}, also known as \emph{ordered set partitions}:
if $\varpi$ is a ballot on $\{1,\ldots,n\}$, then the
corresponding Cayley permutation $w$ has $n$ letters and its $i$th
letter equals the unique index $j$ such that $i$ belongs to the $j$th
block of $\varpi$.
While pattern avoidance has been studied in the context of
ballots~\cite{Birmajer-etal, chen, Godbole2014, KASRAOUI}, the notion of pattern avoidance
we explore in this paper is distinct.
The interplay between (equivalence classes of) pattern-avoiding Cayley permutations
and modified ascent sequences was explored by Cerbai and Claesson~\cite{CC:tpb}.
The same authors introduced Caylerian polynomials~\cite{CC:caypol}, a generalization
of the Eulerian polynomials that tracks the number
of descents over Cayley permutations.

Most of the structures listed so far can be encoded as Cayley 
permutations satisfying additional properties. For instance, modified ascent sequences
are Cayley permutations where
an entry is an ascent top if and only if it is the leftmost copy of the
corresponding integer in the sequence~\cite{CC:tpb}.
Another classical example is given by Stirling permutations~\cite{GS78},
defined as $212$-avoiding Cayley permutations in which each letter appears twice. 

Our approach to pattern-avoiding Cayley permutations
involves the use of combinatorial species, defined in
Section~\ref{sec:Species}, as formal objects that capture both their structural
and enumerative properties.
In Section~\ref{sec:cayleyperms}, we define the species of 
Cayley permutations and in Section~\ref{sec:pattsonCayley}, we introduce the 
notion of pattern avoidance for Cayley permutations. These sections lay the 
groundwork for the remainder of the paper. In Sections~\ref{sec: length two} 
and~\ref{sec: length three}, we provide species
descriptions that lead to generating series and counting formulas for Cayley 
permutations
avoiding any pattern of length two and three. Table~\ref{tab: patterns} summarizes
our main results. 
In Section~\ref{sec: primitive}, we introduce the species of primitive structures as a generalization of Cayley permutations with no ``flat steps'' and study pattern avoidance in this context. 
The paper concludes in Section~\ref{section: wilfeq} with an exploration of various 
notions of Wilf equivalence in connection with Cayley permutations. 

The work contained in this paper was initiated in the fourth author's 2024 master's 
thesis~\cite{Golab2024}. In an upcoming companion paper, we will tackle sets of  patterns.

\section{Species}\label{sec:Species}

In this section, we mimic the development of species in
the book by Bergeron, Labelle and Leroux~\cite{holybook}. See also
Claesson's short introduction to the topic~\cite{12foldway}.
We utilize two different types of species, namely
$\BB$-species and $\LL$-species.  Loosely speaking, a species
defines both a class of (labeled) combinatorial objects and how those
objects are impacted by relabeling. This mechanism of relabeling is
called the transport of structure.

A \emph{$\BB$-species} (or simply \emph{species}) $F$ is a rule
that produces
\begin{itemize}
\item for each finite set $U$, a finite set $F[U]$;
\item for each bijection $\sigma: U \rightarrow V$, a bijection
  $F[\sigma]: F[U] \rightarrow F[V]$ such that
  $F[\sigma \circ \tau] = F[\sigma]\circ F[\tau]$ for all bijections
  $\sigma: U \rightarrow V$, $\tau: V \rightarrow W$, and
  $F[\id_U] = \id_{F[U]}$ for the identity map $\id_U: U \rightarrow U$.
\end{itemize}

An element $s \in F[U]$ is called an \emph{$F$-structure} on $U$ and the
function $F[\sigma]$ is called the \emph{transport of $F$-structures
  along $\sigma$}, or simply \emph{transport of structure} if the
context is clear. 

In the language of category theory, a $\BB$-species is a functor
$F: \BB \to \BB$, where $\BB$ is the category of
finite sets with bijective functions as morphisms.

Below we list several species that will be used throughout this
paper.
\begin{multicols}{2}
\begin{enumerate}[(a)]
\item $E$: sets;
\item\label{E_{even} species} $\Even{E}$: sets of even cardinality;
\item $\Odd{E}$: sets of odd cardinality;
\item $X$: singletons;
\item $1$: characteristic of empty set;
\item $L$:  linear orders;
\item $\Sym$: permutations;
\item $\Cyc$: cyclic permutations;
\item $\Par$: set partitions;
\item $\Bal$: ballots.
\end{enumerate}
\end{multicols}

Let $F$ and $G$ be species. Then $G$ is said to be a \emph{subspecies}
of $F$, and we write $G\subseteq F$, if it satisfies the following two
conditions:
\begin{itemize}
\item for each finite set $U$, we have $G[U]\subseteq F[U]$;
\item for any bijection $\sigma:U\to V$, we have $G[\sigma]=F[\sigma]|_{G[U]}$.
\end{itemize}

For a species $F$, we let $F_+$, $\Even{F}$, $\Odd{F}$, and $F_n$ denote 
the subspecies of $F$ consisting of $F$-structures on nonempty sets, 
sets with even cardinality, sets with odd cardinality, and sets 
with cardinality $n$, respectively.

Throughout this paper, we define $[n] = \{1,2,\ldots,n\}$ for $n\geq 0$, where $[0]=\emptyset$.

\begin{example}[Linear orders]\label{example of linear orders}
  Let $U$ be a finite set with $n = |U|$ elements. We identify a linear
  order of $U$ with a bijection $f: [n] \to U$, which we may represent
  using one-line notation: $f(1)\ldots f(n)$. Letting $B^A$ denote the set
  of functions from $A$ to $B$, we have
  $L[U] = \{ f\in U^{[n]} : f \text{ bijection}\}$. Note that
  $|L[U]|=n!$. The transport of structure along a bijection
  $\sigma:U \to V$ is given by $L[\sigma](f) = \sigma \circ f$ or, in
  one-line notation, $L[\sigma](f)= \sigma(f(1)) \ldots \sigma(f(n))$.
\end{example}

\begin{example}[Permutations]\label{example of permutations}
  Next we describe the species $\Sym$ of permutations.
  The $\Sym$-structures are defined by
  $\Sym[U] = \{f\in U^U: f \text{ bijection}\}$ while the
  corresponding transport of structure along a bijection $\sigma:U \to V$
  is given by $\Sym[\sigma](f) = \sigma \circ f \circ \sigma^{-1}$.
  This reflects the fact that
  conjugation preserves the cycle type of a permutation. Note that
  $|\Sym[U]|=n!$ when $|U|=n$.
\end{example}

To aid in the enumeration of $F$-structures, we associate an exponential
generating series, denoted by $F(x)$. It is easy to see that for any finite set $U$, the
number of $F$-structures on $U$ depends only on the number of elements
of $U$. For ease of notation, we use $F[n] = F[[n]]$.
We define the \emph{(exponential) generating series} of the species $F$ to be the formal power series
\[
   F(x) = \sum_{n \geq 0} |F[n]| \frac{x^n}{n!}.
\]
The generating series associated with some of the species
introduced above are provided below.
\begin{multicols}{2}
\begin{enumerate}[(a)]
  \item\label{E egf} $E(x)= e^x$;
  \item $\Even{E}(x)= \cosh(x)$;
  \item $\Odd{E} = \sinh(x)$;
  \item $X(x) = x$;
  \item $1(x) = 1$;
  \item\label{L egf} $L(x)=1/(1-x)$;
  \item $\Sym(x) = 1/(1-x)$;
  \item $\Cyc(x) = -\log(1-x)$;
  \item $\Par(x) = \exp(e^x -1)$;
  \item $\Bal(x)=1/(2-e^x)$.
  \end{enumerate}
\end{multicols}

Two species $F$ and $G$ are \emph{isomorphic} if there
is a family of bijections
\[
  \alpha_U: F[U] \to G[U]
\]
such that for
any bijection $\sigma: U \to V$ between two finite sets, the diagram in
Figure~\ref{fig: isomorphic as species diagram} commutes. That is,
$\alpha_V \circ F[\sigma] = G[\sigma]\circ \alpha_U$. In the language of
category theory, $F$ and $G$ are isomorphic if and only if there exists a natural
isomorphism between the functors $F$ and $G$. As is tradition, we will
consider two species as equal if they are isomorphic and use the notation
$F=G$ for both concepts.
Clearly, if $F=G$, then $F(x)=G(x)$. It is important to note that, for $\BB$-species,
the converse of this implication is
false. Recall the species $L$ (linear orders) and $\Sym$ (permutations) from
Examples~\ref{example of linear orders} and~\ref{example of permutations}.
Despite $L$ and $\Sym$ not being isomorphic, it should come as no
surprise that they have the same generating series.

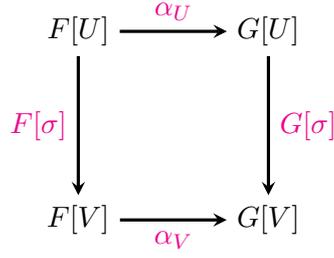
\begin{figure}
  \centering
  \begin{tikzpicture}
    [every circle node/.style={draw, circle, inner sep=1.25pt},scale=1.25]
    \node (a) at (0,0) {$F[V]$};
    \node (b) at (0,2) {$F[U]$};
    \node (c) at (2,0) {$G[V]$};
    \node (d) at (2,2) {$G[U]$};

    \draw [black,-stealth, very thick] (b) to node[midway, left, magenta]
{$F[\sigma]$} (a);
    \draw [black,-stealth, very thick] (b) to node[midway, above, magenta]
{$\alpha_U$} (d);
    \draw [black,-stealth, very thick] (d) to node[midway, right, magenta]
{$G[\sigma]$} (c);
    \draw [black,-stealth, very thick] (a) to node[midway, below, magenta]
{$\alpha_V$} (c);
  \end{tikzpicture}
  \caption{Commutative diagram for isomorphic species.}
  \label{fig: isomorphic as species diagram} 
\end{figure}

One can build new species with operations on previously known
species. For species
$F$ and $G$, an \emph{$(F+G)$-structure} is either an $F$-structure or
a $G$-structure. Denoting disjoint union by $\sqcup$, we have
\[
  (F+G)[U]=F[U] \sqcup G[U]
\]
and for all bijections $\sigma: U \to V$,
\begin{equation*}
  (F+G)[\sigma](s) = 
  \begin{cases}
    F[\sigma](s) &  \text{if }s \in F[U], \\
    G[\sigma](s) &  \text{if }s \in G[U].
  \end{cases}
\end{equation*}
It is easy to see that $(F+G)(x)=F(x)+G(x)$.
 
For species $F$ and $G$, we define an \emph{$(F \cdot G)$-structure} on a
finite set $U$ to be a pair $(s,t)$ such that $s$ is an $F$-structure on a
subset $U_1 \in U$ and $t$ is a $G$-structure on
$U_2 = U \setminus U_1$. Formally,
\[(F \cdot G)[U] = \bigsqcup_{(U_1,U_2)} F[U_1] \times G[U_2]\] with
$U=U_1 \sqcup U_2$.
Transport of structure is defined by
\[(F \cdot G)[\sigma](s,t) = (F[\sigma_1](s), G[\sigma_2](t)),\] where
$\sigma_1 = \sigma |_{U_1}$ and $\sigma_2 = \sigma |_{U_2}$. It turns out
that $(F \cdot G)(x) = F(x) \cdot G(x)$.

Now we define the composition of species. For species $F$ and $G$, 
an
\emph{$(F \circ G)$-structure} is a generalized partition in which each
block of a partition carries a $G$-structure and blocks are structured
by $F$. Formally, if $F$ and $G$ are two species such that
$G[\emptyset] = \emptyset$, we define
\[\displaystyle (F \circ G)[U] =
  \bigsqcup_{\beta=\{B_1,\ldots, B_k\}}
  F[\beta] \times G[B_1] \times \cdots \times G[B_k],
\]
where $\beta=\{B_1,\ldots, B_k\}$ is a partition of $U$. The
details on the transport of structure can be found in Chapter~1
of the book by Bergeron, Labelle and Leroux~\cite{holybook} or on
page 521 of Claesson~\cite{12foldway}.
As expected, it is also true that $(F \circ G)(x) = F(G(x))$.

\begin{example}[Cyclic permutations]\label{example of cyclic permutations}
  Any permutation can be written as a product of disjoint cycles. Since
  the cycles are disjoint they commute and can be written in any
  order. In other words, a permutation can be viewed as a set of 
  cycles. In terms of composition of species we have
  \[
    \Sym = E(\Cyc),
  \]
  where $\Cyc$ is the species of \emph{cyclic permutations}; that is,
  permutations that when written as a product of disjoint cycles
  consist of a single cycle.
\end{example}

\begin{example}[Ballots]\label{example of ballots}
A \emph{ballot}
on a finite set $U$ is a sequence of sets $B_1B_2\ldots B_k$, where each 
$B_i$ is a nonempty subset of $U$, $B_i \cap B_j \neq \emptyset$ for 
$i \neq j$, and $\bigcup_{i=1}^k B_i = U$. Each $B_i$ is referred to as 
a \emph{block}. The species of ballots is given by $\Bal = L(E_+)$.
For a bijection $\sigma: U \to V$, the transport of structure
$\Bal[\sigma]: \Bal[U] \to \Bal[V]$ is given by
\[
\Bal[\sigma] (B_1B_2\ldots B_k ) = \sigma(B_1)\sigma(B_2)\ldots \sigma(B_k),
\]
where $\sigma(B_i)=\{\sigma(x) : x \in B_i\}$. It follows that
\[
\Bal(x) = L(E_+(x)) = \frac{1}{1-(e^x-1)} = \frac{1}{2-e^x}.
\]
\end{example}

The examples of species we have met thus far are well known~\cite{holybook}.
In contrast, the species specification of the alternating group given in
the next example is, as far as we know, new.

\begin{example}[Alternating group]\label{example: the alternating group}
  Let a permutation $f\in \Sym[n]$ be given and assume that when written
  as a product of disjoint cycles it has $c$ cycles. It is easy to see
  that the parity of $f$ is the same as the parity of $n-c$. Hence
  $f$ is even if and only if $n$ and $c$ are both even or $n$ and $c$
  are both odd. Thus,
  \[
    \Alt =
    \Even{(\Even{E}\circ \Cyc)} +
    \Odd{(\Odd{E}\circ\Cyc)}
  \]
  is a species specification of the \emph{alternating group}.
  The integers are commonly defined as equivalence classes of pairs of
  natural numbers, where $(a,b)\sim (c,d)\Leftrightarrow
  a+d=b+c$. Virtual species~\cite[\S 2.5]{holybook} is a similar extension
  allowing for the subtraction of species.
  Using virtual species, for any species $F$, we have
  \begin{align*}
  \Even{F} & = F_0+F_2+F_4+\cdots = \frac{1}{2}\bigl(F + F(-X)\bigr);\\
  \Odd{F} & = F_1+F_3+F_5+\cdots = \frac{1}{2}\bigl(F - F(-X)\bigr).
  \end{align*}
  For ease of notation, let
  $\Sym_e=\Even{E}(\Cyc)$
  and
  $\Sym_o=\Odd{E}(\Cyc)$
  be the species of permutations with an even and odd number of cycles,
  respectively.  Then,
  \begin{align}
    \Alt
    &= \Even{(\Sym_e)} + \Odd{(\Sym_o)} \nonumber \\[1ex]
    &= \frac{1}{2}\bigl(\Sym_e + \Sym_e(-X)\bigr)
      + \frac{1}{2}\bigl(\Sym_o - \Sym_o(-X)\bigr) \nonumber \\[1ex]
    &= \frac{1}{2}\bigl(S + \Sym_e(-X)-\Sym_o(-X)\bigr).\label{eq:Alt B-species}
  \end{align}
  We will return to this example once $\LL$-species have been introduced.
\end{example}

The \emph{derivative} of a species $F$,
denoted $F'$, is defined via \[F'[U] = F[U \sqcup \{\star\}] \] and for
a bijective map
$\sigma: U \to V, \text{ we define } F'[\sigma] = F[\tau]$, where
\begin{equation*}
  \tau(x) = 
  \begin{cases}
    \sigma(x) & \text{if } x \in U, \\
    \star     & \text{if } x=\star.
  \end{cases}
\end{equation*}
In terms of generating series, we have
$F'(x)=\frac{d}{dx} [F(x)]$.

\begin{example}
To illustrate the derivative of a species, we look at the derivative
  of the species of linear orders. We claim that $L' =
  L^2$. Combinatorially, each $L'$-structure is simply an $L$-structure
  preceding $\star$ followed by another $L$-structure. That is, the
  derivative of a linear order just separates the linear order into two
  linearly ordered components. For instance, $41 \star 5263 \mapsto (41,5263)$.
  Consequently, we have
  \[ L'(x) 
    = \frac{d}{dx}\left[\frac{1}{1-x}\right]
    = \frac{1}{(1-x)^2} = L^2(x).
  \]
\end{example}

The last operation we will define for $\BB$-species is
\emph{pointing}. For a species $F$, we define the species $F^{\bullet}$,
called \emph{$F$-pointed}, via
\[
  F^{\bullet}[U] = F[U] \times U.
\]
That is, an \emph{$F^{\bullet}$-structure} on $U$ is a pair $(s,u),$
where $s$ is an $F$-structure on $U$ and $u\in U$ is a distinguished
element that we can think of as being ``pointed at''. The operations of
pointing and differentiation are related by
\[
F^\bullet = X \cdot F'.
\]
Further, we have $|F^{\bullet}[n]| = n |F[n]|$.

Next we look at $\LL$-species. The key differences is that for $\LL$-species, the
underlying sets are totally ordered and morphisms have to respect that order. 

Let $\ell_1 = (U_1, \preceq_1)$ and $\ell_2=(U_2,\preceq_2)$ be two totally 
ordered sets. Their \emph{ordinal sum} is the totally
ordered set $\ell=(U, \preceq)$, denoted by $\ell=\ell_1 \oplus \ell_2$, where
\begin{equation*}
  u \preceq v \;\iff\;
  \begin{cases}
    u \prec_1 v  & \text{if } u,v\in U_1, \\
    u \prec_2 v  &  \text{if } u,v\in U_2, \\
    u \in U_1 \text{ and } v\in U_2  & \text{otherwise}. \\
  \end{cases}
\end{equation*}
In other words, $\ell$ respects $\ell_1$ and $\ell_2$, and all elements of $\ell_1$
are smaller than the elements of $\ell_2$. The totally ordered set obtained
by adding a new minimum element to $\ell$ is denoted
by $1 \oplus \ell$. Similarly, we can append a new maximum element to obtain 
$\ell \oplus 1$.

An \emph{$\LL$-species} is a functor $F:\LL\to\BB$, where $\LL$ is the category
of finite totally ordered sets with order preserving bijections. In other words,
an $\LL$-species is a rule $F$ that associates
\begin{itemize}
\item for each finite totally ordered set $\ell$, a finite set $F[\ell]$;
\item for each order preserving bijection $\sigma: \ell_1 \to \ell_2$, a
  bijection $F[\sigma]: F[\ell_1] \to F[\ell_2]$ such that
  $F[\sigma \circ \tau] = F[\sigma] \circ F[\tau]$ for all order
  preserving bijections $\sigma: \ell_1 \rightarrow \ell_2$,
  $\tau: \ell_2 \rightarrow \ell_3$, and $F[\id_{\ell}]=\id_{F[\ell]}$.
\end{itemize}

Any $\BB$-species $F$ produces an $\LL$-species, also denoted by
$F$, defined by setting
\[
  F[(U, \preceq)] = F[U],
\]
for any totally ordered set $\ell=(U,\preceq)$, where the transport of
structure is obtained by restriction to order preserving bijections. 
All of the $\BB$-species defined earlier have the same name when
interpreted as $\LL$-species. 

Two $\LL$-species $F$ and $G$ are \emph{isomorphic} if there is a family
of bijections
\[
\alpha_{\ell}: F[\ell] \to G[\ell],
\]
for each totally ordered set $\ell$ that commutes with the transports of
structures. 

For an $\LL$-species $F$, the associated generating series
is defined in the same way as for $\BB$-species:
$F(x) = \sum_{n \geq 0} |F[n]| x^n/n!$.
Since there is a unique order preserving 
bijection between any two totally ordered sets with the 
same cardinality we have that, for $\LL$-species, $F=G$ if and only if $F(x)=G(x)$.
In particular, it is possible for two
nonisomorphic $\BB$-species to become isomorphic when looked at
as $\LL$-species. 
For example, as $\LL$-species, $L=\Sym$. In this case,
$f \in \Sym[n]$ naturally becomes the word
$f(1) f(2) \ldots f(n) \in L[n].$

Operations on $\mathbb{B}$-species can be extended to $\LL$-species while new operations 
such as integration,
ordinal product, and convolution also become possible. 
For an $\LL$-species $F$, we define the \emph{derivative} $F'$ via
\[
F'[\ell] = F[1 \oplus \ell]. 
\]
Certainly, one can equivalently append a new maximal element to a totally ordered set 
$\ell$ as opposed to a new minimal element,
hence we have the alternative definition $F'[\ell] = F[\ell\oplus 1]$.
We also define the \emph{integral} of $F$, denoted $\int_0^X F(T) dT$, or more simply $\int F$, by
\[
\left( \int F \right) [\ell] =
\begin{cases}
  \emptyset &  \text{if }\ell = \emptyset, \\
  F[\ell \setminus \{\min(\ell)\}] & \text{if } \ell \neq \emptyset.
\end{cases}
\]
Given $\LL$-species $F$ and $G$, we define the \emph{ordinal product} $F\odot G$ via
\[
(F \odot G)[\ell] = \sum_{\ell=\ell_1 \oplus \ell_2} F[\ell_1] \times G[\ell_2].
\]
In contrast to the product structure, an ordinal product structure
$(F \odot G)[\ell]$ is obtained by splitting $\ell$ into an initial segment
$\ell_1$ and a terminal segment $\ell_2$, where $\ell_1$ has an $F$-structure
and $\ell_2$ has a $G$-structure.
The \emph{convolution product} $F*G$ is given by
\[
F * G = F \odot X \odot G.
\]
Details about the corresponding transport of structures for 
each of these $\LL$-species operations can be found in
the usual book~\cite{holybook}.

As expected, if $F$ and $G$ are $\LL$-species, then we have the following:
\begin{enumerate}[label=(\alph*)]
\item \, $(F+G)(x) = F(x) + G(x)$;
\item \, $(F\cdot G)(x) = F(x)G(x)$;
\item \, $(F \circ G)(x) = F(G(x))$, where $G(0)=0$;
\item \, $\displaystyle F'(x) = \frac{d}{dx} F(x)$;
\item\label{prop:speciesintegral}
 \, $\displaystyle \left( \int_0^X F(T) dT \right)(x) = \int_o^x F(t) dt$;
\item \, $\displaystyle (F * G)(x) = F(x) * G(x)=\int_0^x F(x-t)G(t)dt$.
\end{enumerate}
In addition,
\begin{enumerate}[resume, label=(\alph*)]
\item \, $\displaystyle \frac{d}{dx} [F(x)*G(x)] = F(0) \cdot G(x) + F'(x) * G(x)$,
\end{enumerate}
which is sometimes referred to as the \emph{Leibniz rule}.

\begin{example}[Alternating group as an $\LL$-species]\label{ex:Alt L-species}
  Clearly, we have the $\BB$-species identity $E=\Even{E}+\Odd{E}$,
  which is reflected in the familiar hyperbolic functions identity
  $e^x=\cosh(x)+\sinh(x)$. In a similar vein, consider the combinatorial
  counterpart of the identity $\cosh(x)^2-\sinh(x)^2=1$, namely
  $\Even{E}^2=1+\Odd{E}^2$. This holds as an identity among $\LL$-species,
  but it is false as a $\BB$-species identity. While there are two
  ``unlabeled'' $\Even{E}^2$-structures of size two, namely $(\{\circ,\circ\},\emptyset)$
  and $(\emptyset,\{\circ,\circ\})$, there is a single unlabeled
  $(1+\Odd{E}^2)$-structure of size two, namely $(\{\circ\},\{\circ\})$.
  Continuing with the $\LL$-species setting, we have
  \begin{align*}
    1 &= \Even{E}^2-\Odd{E}^2 \\
      &= (\Even{E}-\Odd{E})(\Even{E}+\Odd{E}) \\
      &= (\Even{E}-\Odd{E})\cdot E,
  \end{align*}
  which when composing with $\Cyc$ yields $1 = (\Sym_e-\Sym_o)\cdot S$.
  The multiplicative inverse of $\Sym$ is the virtual species $1-X$, and hence
  $\Sym_e-\Sym_o = 1-X$. Applying this to expression~\eqref{eq:Alt B-species}
  for the species $\Alt$ in Example~\ref{example: the alternating group},
  we arrive at
  \begin{equation}\label{eq:Alt L-species}
    \Alt = \frac{1}{2}\bigl(\Sym + 1+ X\bigr).
  \end{equation}
\end{example}

Continuing with the topic of permutations, we present
the following proposition.

\begin{proposition}\label{prop: Sym convolution}
  $\Sym = E + E*\Sym^{\bullet}$.
\end{proposition}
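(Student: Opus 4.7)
The plan is to construct a natural family of bijections $\alpha_\ell \colon (E + E * \Sym^\bullet)[\ell] \to \Sym[\ell]$, thereby exhibiting the species isomorphism. The intuition is that a permutation on a totally ordered set $\ell$ either fixes every element (corresponding to the $E$ summand) or has a smallest non-fixed point $x$, and this $x$ determines a canonical splitting $\ell = \ell_1 \oplus \{x\} \oplus \ell_2$ with $\ell_1 = \{i \in \ell : i < x\}$ and $\ell_2 = \{i \in \ell : i > x\}$.

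Unfolding the definition of the convolution product, an $(E * \Sym^\bullet)$-structure on $\ell$ consists of such a splitting together with a pointed permutation $(\sigma, k) \in \Sym[\ell_2] \times \ell_2$ (the $E$-structure on $\ell_1$ being canonical). I would send this data to the permutation $\pi \in \Sym[\ell]$ defined by $\pi|_{\ell_1} = \id_{\ell_1}$, $\pi(x) = \sigma(k)$, $\pi(k) = x$, and $\pi(i) = \sigma(i)$ for $i \in \ell_2 \setminus \{k\}$; equivalently, $\pi$ is obtained from $\sigma$ (extended by fixing $\ell_1 \cup \{x\}$) by inserting $x$ immediately after $k$ in the cycle through $k$. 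The unique $E$-structure on $\ell$ is sent to $\id_\ell$. Notice that $\pi(x) = \sigma(k) \in \ell_2$, so $\pi(x) \neq x$, which guarantees that the two summands have disjoint images.

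For the inverse, given a non-identity $\pi \in \Sym[\ell]$, let $x$ be the smallest non-fixed point of $\pi$ and define $\ell_1, \ell_2$ as above. The key observation is that both $\pi(x)$ and $k := \pi^{-1}(x)$ lie in $\ell_2$: neither equals $x$ by choice of $x$, and neither can lie in $\ell_1$, since by minimality of $x$ every element of $\ell_1$ is already a fixed point of $\pi$, so bijectivity forbids any other element mapping into or out of $\ell_1$. Defining $\sigma$ on $\ell_2$ by $\sigma(k) = \pi(x)$ and $\sigma|_{\ell_2 \setminus \{k\}} = \pi|_{\ell_2 \setminus \{k\}}$ yields a permutation of $\ell_2$, and the pair $(\sigma, k)$ together with the splitting inverts the forward map. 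Naturality of $\alpha$ is automatic, since the entire construction depends only on the order on $\ell$ and order-preserving bijections preserve smallest non-fixed points and initial segments. The only real (minor) obstacle is the case analysis showing $\pi(x), \pi^{-1}(x) \in \ell_2$; everything else is bookkeeping, and as a sanity check the resulting generating-series identity reduces to the telescoping identity $\sum_{j=0}^{n-1} j \cdot j! = n! - 1$.
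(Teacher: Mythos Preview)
Your proof is correct and follows essentially the same approach as the paper: identify the smallest non-fixed point $x$ of $\pi$, take the initial segment of fixed points as the $E$-structure, and excise $x$ from its cycle to obtain the $\Sym$-structure on $\ell_2$. The only cosmetic difference is that the paper points at $\pi(x)$ (so $x$ is inserted immediately \emph{before} the distinguished element when going backward), whereas you point at $\pi^{-1}(x)$ (so $x$ is inserted immediately \emph{after} it); both choices yield valid bijections.
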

\begin{proof}
  Let $n\geq 0$.  We shall construct a bijection from
  $\Sym[n]$ to $(E + E*\Sym^{\bullet})[n]$.  Let a permutation
  $\pi\in\Sym[n]$ be given. If $\pi=12\ldots n$ is the identity
  permutation, then we map $\pi$ to $\{1,2,\ldots,n\}$, the single
  element of $E[n]$. Assume that $\pi$ has at least one point that is
  not fixed.  In this case we will have to map $\pi$ to an
  element of $(E*\Sym^{\bullet})[n]$. Recall that, by definition of
  convolution,
  \[
    E*\Sym^{\bullet} = E\odot X \odot \Sym^{\bullet}.
  \]
  Now, let $k$ be the largest integer such that $1,2,\dots,k$ are fixed
  points in $\pi$. Note that $0\leq k<n-1$. We interpret this maximal
  prefix of fixed points as the set $\{1,2,\dots,k\}$ (an
  $E$-structure). Consider the remaining suffix of $\pi$. Remove $k+1$
  from the cycle it belongs to (this does not alter the number of cycles
  since $k+1$ is not a fixed point) and interpret $k+1$ as a singleton
  (an $X$-structure). From what remains of the suffix we form an
  $\Sym^{\bullet}$-structure by pointing at the element $\pi(k+1)$. Note
  that $\pi(k+1) > k+1$ by the choice of $k$.
\end{proof}

\begin{example}\label{ex: bijection illustration}
  Below is an example of the bijection in the proof of
  Proposition~\ref{prop: Sym convolution}.
  \[
  \begin{tikzpicture}[
    dot/.style = {circle, draw, inner sep=1.5pt, node contents={}, label=#1},
    every loop/.style={min distance=7mm,in=60,out=120,looseness=10},
    semithick,
    baseline=-3pt
    ]
    \begin{scope}[font = {\small}]
      \node (c1) at (-0.3,0) [dot=below:$1$];
      \path (c1) edge[loop above, -stealth] (c1);
      \node (c2) at (0.5,0) [dot=below:$2$];
      \path (c2) edge[loop above, -stealth] (c2);
      \node (c3-3) at (1.5,0) [dot=left:$3$];
      \node (c3-7) at (2,0.5) [dot=above:$7$];
      \node (c3-9) at (2.5,0) [dot=right:$9$];
      \node (c3-4) at (2,-0.5) [dot=below:$4$];
      \path (c3-3) edge[-stealth, bend left] (c3-7);
      \path (c3-7) edge[-stealth, bend left] (c3-9);
      \path (c3-9) edge[-stealth, bend left] (c3-4);
      \path (c3-4) edge[-stealth, bend left] (c3-3);
      \node (c4) at (3.5,0) [dot=below:$5$];
      \path (c4) edge[loop above, -stealth] (c4);
      \node (c5-6) at (4.4, -0.35) [dot=below:$6$];
      \node (c5-8) at (4.4, 0.35) [dot=above:$8$];
      \node (c5-10) at (5,0) [dot=right:$10$];
      \path (c5-6) edge[bend left, -stealth] (c5-8);
      \path (c5-8) edge[bend left, -stealth] (c5-10);
      \path (c5-10) edge[bend left, -stealth] (c5-6);
    \end{scope}
  \end{tikzpicture}
  \;\mapsto\;
  \Biggl(
  \{1,2\}, \;\; \{3\}, \;\;
  \begin{tikzpicture}[
    dot/.style = {circle, draw, inner sep=1.5pt, node contents={}, label=#1},
    every loop/.style={min distance=7mm,in=60,out=120,looseness=10},
    semithick,
    baseline=-3pt
    ]
    \begin{scope}[font = {\small}]
      \node[fill] (c3-7) at (2,0.4) [dot=above:$7$];
      \node (c3-9) at (2.6,0) [dot=right:$9$];
      \node (c3-4) at (2,-0.4) [dot=below:$4$];
      \path (c3-7) edge[-stealth, bend left] (c3-9);
      \path (c3-9) edge[-stealth, bend left] (c3-4);
      \path (c3-4) edge[-stealth, bend left] (c3-7);
      \node (c4) at (3.4,0) [dot=below:$5$];
      \path (c4) edge[loop above, -stealth] (c4);
      \node (c5-6) at (4.2, -0.35) [dot=below:$6$];
      \node (c5-8) at (4.2, 0.35) [dot=above:$8$];
      \node (c5-10) at (4.8, 0) [dot=right:$10$];
      \path (c5-6) edge[bend left, -stealth] (c5-8);
      \path (c5-8) edge[bend left, -stealth] (c5-10);
      \path (c5-10) edge[bend left, -stealth] (c5-6);
    \end{scope}
  \end{tikzpicture}\!
  \Biggr)
  \]
  its \emph{functional digraph}:
  the directed graph with vertex
  set $[n]$ and arcs $i\to \pi(i)$. The vertex pointed at is black.
 \end{example}

Let $\AltC$ denote the subspecies of $\Sym$ consisting of the odd
permutations, which is the complement of the alternating group in the
sense that $\Sym=\Alt+\AltC$. We have the following corollary of 
Proposition~\ref{prop: Sym convolution}, which will turn out to be useful when 
proving Theorem~\ref{prop: species eq for Cay(212)}.

\begin{corollary}\label{cor:X}
  $\AltC = E*\Alt^{\bullet}$.
\end{corollary}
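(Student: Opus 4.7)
The plan is to refine the bijection constructed in the proof of Proposition \ref{prop: Sym convolution} by tracking parity. Writing $\Sym = \Alt + \AltC$, what I want is to show that the summand $\AltC \subseteq \Sym$ corresponds under that bijection precisely to the summand $E * \Alt^{\bullet} \subseteq E + E * \Sym^{\bullet}$, so that the corollary drops out from restricting the bijection to the odd permutations.

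First I would rule out the $E$-summand. The identity permutation $12\ldots n$ is even since it has $c = n$ cycles and $n - c = 0$. Hence no element of $\AltC[n]$ is sent to an $E$-structure, and it suffices to analyze the restriction of the bijection to non-identity permutations, whose images lie in $(E * \Sym^{\bullet})[n]$.

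The main step is a short parity computation. Fix a non-identity $\pi \in \Sym[n]$, and let $k$ be the largest integer such that $1, 2, \ldots, k$ are fixed points of $\pi$. The remaining suffix of $\pi$ is a permutation on $n - k$ elements with, say, $c'$ cycles, so $\pi$ has $k + c'$ cycles in total, with parity $n - k - c' \pmod 2$. The bijection removes $k+1$ from its cycle (which has length at least $2$) and points at $\pi(k+1)$, yielding a permutation $\tau$ on $n - k - 1$ elements with exactly $c'$ cycles. Its parity is $(n - k - 1) - c' \pmod 2$, which differs from that of $\pi$ by exactly $1$. Since pointing does not affect parity, $\pi$ is odd if and only if the resulting $\Sym^{\bullet}$-structure is actually an $\Alt^{\bullet}$-structure. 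Running the argument in reverse, every element of $(E * \Alt^{\bullet})[n]$ has an odd preimage, so the restriction is onto.

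The only delicate point is the parity count above; everything else is just unpacking the bijection from Proposition \ref{prop: Sym convolution}. Since that bijection is already compatible with the order-preserving transport of structure, restricting it to $\AltC$ yields the desired $\LL$-species identity $\AltC = E * \Alt^{\bullet}$.
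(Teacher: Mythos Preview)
Your proof is correct and follows essentially the same approach as the paper: restrict the bijection from Proposition~\ref{prop: Sym convolution} to odd permutations and observe that deleting $k+1$ from its (nontrivial) cycle reverses parity. The paper phrases the parity flip more tersely, while you spell out the $n-c$ count explicitly, but the argument is the same.
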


\begin{proof}
  Let $\pi$ be an odd permutation. Consider the image of $\pi$ under the
  bijection from the proof of Proposition~\ref{prop: Sym convolution}.
  Removing a maximal prefix $1$, $2$, \dots, $k$ of fixed points from
  $\pi$ does not alter the parity of $\pi$. On the other hand, deleting
  $k+1$ from its cycle shortens that cycle by one and reverses its parity.
  It follows that the restriction of this bijection to odd permutations
  proves our corollary. See Example~\ref{ex: bijection illustration},
  above, for an example of this bijection.
\end{proof}

\begin{example}[$231$-avoiding permutations]\label{ex:catalan species}
  A permutation or linear order $w=w_1w_2\ldots w_n$ on $[n]$
  contains $231$ (as a pattern) if it contains a subsequence $w_iw_jw_k$
  that is order isomorphic to $231$; that is, if
  $w_k<w_i<w_j$. Otherwise, $w$ is $231$-avoiding.
  It is well known that the number of $231$-avoiding permutations of $[n]$ is given
  by the $n$th Catalan number, $\binom{2n}{n}/(n+1)$. As an illustration
  of the species approach adopted in this article we shall now give a
  somewhat unorthodox proof of this fact.
  Let $F=\Sym(231)$ be the $\LL$-species of $231$-avoiding permutations, 
  which is a subspecies of $\Sym$.
  Let $\ell$ be a nonempty totally ordered set and let
  $w\in\Sym[\ell]$. Then $w$ belongs to $F[\ell]$ if and only if there
  are totally ordered sets $\ell_1$ and $\ell_2$ such that
  $\ell=\ell_1\oplus\ell_2\oplus 1$, and $w$ can be factored as $w=umv$,
  where $u\in F[\ell_1]$, $v\in F[\ell_2]$, and $m=\max(\ell)$. In other
  words, $F$ is characterized be the following equation:
  \[
    F=1+F\odot X \odot F = 1+F*F.
  \]
  Our goal is to show that $a_n=|F[n]|$ is the $n$th Catalan
  number. Note that $a_n = F^{(n)}(0)$, where $F^{(n)}$ is the $n$th
  derivative of $F$. Now, by the Leibniz rule,
  \[
    F'(x) = (F'*F)(x) + a_0F(x).
  \]
  Differentiating this expression and, again, using the Leibniz rule,
  we obtain
  \[
    F''(x) = (F''*F)(x) + a_0F'(x)+a_1F(x).
  \]
  Continuing this way we find that
  \[
    F^{(n+1)}(x) = (F^{(n+1)}*F)(x) + \sum_{i=0}^na_iF^{(n-i)}(x).
  \]
  Note that $(F^{(n+1)}*F)(0)=0$ and hence, by identifying coefficients in the
  above identity when $x=0$, we obtain $a_0=1$ and
  \[
    a_{n+1}=\sum_{i=0}^na_ia_{n-i},
  \]
  which is the usual recurrence for the Catalan numbers. We will present a
  kindred recurrence for the number of $231$-avoiding Cayley permutations in
  Proposition~\ref{prop: Cay(231) recurrence}.
\end{example}

\section{Cayley permutations}\label{sec:cayleyperms}

In the introduction we defined a Cayley permutation as a word $w$ of
positive integers such that if a letter $b$ appears in $w$ then all
positive integers $a<b$ also appear in~$w$. Expressed differently, a Cayley
permutation is a function $w:[n]\to [n]$ such that $\Img(w) = [k]$ for
some $k\leq n$. To define the $\BB$-species of Cayley permutations we
have to generalize this a bit by allowing the domain to be any $n$ element
set. Thus, a
\emph{Cayley permutation} on a finite set $U$ is a function
$w: U \rightarrow [n]$ such that $|U| = n$ and $\Img(w) = [k]$ for some $k \leq n$.
And we let $\Cay$ be the $\BB$-species with structures
\[ 
\Cay[U] \,=\,
  \bigl\{
    w\in [n]^U \!:\, \Img(w) = [k] \text{ for some } k \leq n
  \bigr\},
\]
together with the transport of structure along a bijection
$\sigma: U \to V$ defined via 
\[
\Cay[\sigma](w)=w \circ \sigma^{-1}.
\]
For $w \in \Cay[n]$, we utilize one-line notation and write
$w = w_1w_2\ldots w_n$, where $w_i = w(i)$. In this case, we say that $w$ is of
\emph{length} $n$. 

For example,
\begin{itemize}
\item $\Cay[1] = \{1\}$;
\item $\Cay[2] = \{11, 12, 21\}$;
\item $\Cay[3] = \{111, 112, 121, 122, 123, 132, 211, 212, 213, 221, 231, 312, 321\}$.
\end{itemize}

For a finite set $U$, we define
$\alpha_U: \Cay[U] \to \Bal[U]$ via
\[
  \alpha_U(w) = B_1B_2\ldots B_k,
  \text{ where } k=|\Img(w)| \text{ and } B_i = w^{-1}(\{i\}).
\]
This map is clearly bijective and hence
$|\Bal[n]| = |\Cay[n]|$. It also follows
(see Example~\ref{example of ballots}) that
$\Cay(x) = \Bal(x) = 1/(2-e^x)$. We shall see in
Proposition~\ref{prop: as species, cay=bal} below that, via the bijections
$\alpha_U$, the species $\Cay$ and $\Bal$ are in fact isomorphic.

\begin{example}\label{ex: transport cay and bal}
We wish to illustrate how the bijections $\alpha_U$
respect the transport of structure induced by $\Cay$ and $\Bal$, a fact we
will prove formally in Proposition~\ref{prop: as species, cay=bal}.
Let $U=[8]$ and $V=\{\wking,\bking,\wrook,\brook,\wknight,\bknight,\wpawn,\bpawn\}$.
Define a bijection $\sigma:U\longrightarrow V$ by
$$
\sigma={\setlength\arraycolsep{1pt}\begin{pmatrix}
    1 & 2 & 3 & 4 & 5 & 6 & 7 & 8\\
    \wking & \wrook & \wknight & \wpawn & \bking & \brook & \bknight & \bpawn
    \end{pmatrix}}.
$$
That is, $\sigma(1)=\wking, \sigma(2)=\wrook, \sigma(3)=\wknight$, etc.
The action of the maps $\Bal[\sigma]\circ\alpha_U$ and $\alpha_V\circ\Cay[\sigma]$
on the $\Cay[U]$-structure $w=31342224$ is illustrated by the
commutative diagram in Figure~\ref{fig: transport cay and bal}.
\end{example}

\begin{figure}
\centering
\begin{tikzpicture}[xscale=1.15, yscale=1.05]
\node (a) at (0,0)
{$\setlength\arraycolsep{1pt}\begin{pmatrix}
\wking & \wrook & \wknight & \wpawn & \bking & \brook & \bknight & \bpawn\\
3 & 1 & 3 & 4 & 2 & 2 & 2 & 4
\end{pmatrix}$};
\node (b) at (0,2) {$31342224$};
\node (c) at (6,0) {$(\{\wrook\},\{\bking,\brook,\bknight\},\{\wking,\wknight\},\{\wpawn,\bpawn\})$};
\node (d) at (6,2) {$(\{2\},\{5,6,7\},\{1,3\},\{4,8\})$};
\draw [black,-stealth, very thick] (b) to node[midway, left, magenta]
{$\Cay[\sigma]$} (a);
\draw [black,-stealth, very thick] (b) to node[midway, above, magenta]
{$\alpha_U$} (d);
\draw [black,-stealth, very thick] (d) to node[midway, right, magenta]
{$\Bal[\sigma]$} (c);
\draw [black,-stealth, very thick] (a) to node[midway, below, magenta]
{$\alpha_V$} (c);
\end{tikzpicture}
\caption{An instance of the compatibility between the transport of structures for $\Cay$ 
and $\Bal$ described in Example~\ref{ex: transport cay and bal}.}
\label{fig: transport cay and bal} 
\end{figure}
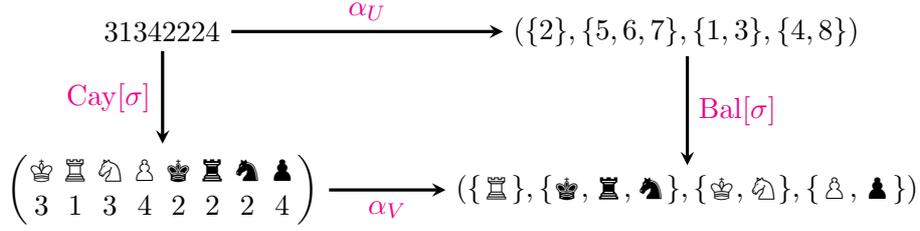

\begin{proposition}\label{prop: as species, cay=bal}
  As $\BB$-species, $\Cay=\Bal$.
\end{proposition}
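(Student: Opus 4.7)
The plan is to verify that the family $\{\alpha_U\}$ already defined in the text constitutes an isomorphism of species. Since each $\alpha_U$ is clearly bijective (its inverse sends a ballot $B_1\ldots B_k$ to the function $w$ with $w(u)=i$ iff $u\in B_i$), the only remaining content is the naturality square
\[
  \alpha_V \circ \Cay[\sigma] \;=\; \Bal[\sigma]\circ \alpha_U
\]
for every bijection $\sigma:U\to V$. So I would fix such a $\sigma$ and an arbitrary $w\in\Cay[U]$, and chase the diagram on both sides.

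Let $n=|U|=|V|$, and suppose $\Img(w)=[k]$. First I would compute the top-right path: $\alpha_U(w)=B_1B_2\ldots B_k$ with $B_i=w^{-1}(\{i\})$, and then $\Bal[\sigma]$ sends this to $\sigma(B_1)\sigma(B_2)\ldots\sigma(B_k)$. Next I would compute the left-bottom path: $\Cay[\sigma](w)=w\circ\sigma^{-1}$, which is still a Cayley function on $V$ with image $[k]$ (so in particular the number of blocks agrees on both sides), and then $\alpha_V(w\circ\sigma^{-1})=C_1C_2\ldots C_k$, where
\[
   C_i \;=\; (w\circ\sigma^{-1})^{-1}(\{i\}) \;=\; \sigma\bigl(w^{-1}(\{i\})\bigr) \;=\; \sigma(B_i).
\]
The two outputs agree block by block, so the square commutes.

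The argument is essentially a one-line set-theoretic manipulation once notation is unpacked; there is no genuine obstacle. I would simply make sure to state up front that $|U|$, $|V|$, and the cardinality $k$ of the image are preserved by $\sigma$ (so that both sides produce ballots with the same number of blocks) before doing the $(w\circ\sigma^{-1})^{-1}(\{i\})=\sigma(w^{-1}(\{i\}))$ computation. Example~\ref{ex: transport cay and bal} and Figure~\ref{fig: transport cay and bal} provide an explicit instance of this verification, so the proof can simply refer back to that diagram for intuition after the general calculation.
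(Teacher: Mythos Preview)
Your proposal is correct and follows essentially the same approach as the paper: both verify naturality of the family $\{\alpha_U\}$ by chasing the square and reducing to the identity $(w\circ\sigma^{-1})^{-1}(\{i\})=\sigma(w^{-1}(\{i\}))$. The only cosmetic difference is that the paper writes this as a single chain of equalities from $(\alpha_V\circ\Cay[\sigma])(w)$ to $(\Bal[\sigma]\circ\alpha_U)(w)$, whereas you compute the two paths separately and compare.
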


\begin{proof}
Let $\sigma: U \to V$ be a bijection between finite sets $U$ and $V$. We
aim to show that Figure~\ref{fig: isomorphic as species diagram}
commutes for species $\Cay$ and $\Bal$. Let $w \in \Cay[U]$ with
$\Img(w)=[k]$. We see that
\begin{align*}
   \bigl(\alpha_V \circ \Cay\bigr)[\sigma] (w)
  &= \alpha_V( \Cay[\sigma] (w)) \\
  &= \alpha_V(w \circ \sigma^{-1}) \\
  &= (w\circ \sigma^{-1})^{-1}(\{1\})\, \dots\, (w\circ \sigma^{-1})^{-1}(\{k\}) \\
  &= \Bal[\sigma](w^{-1}(\{1\})) \, \dots\, \Bal[\sigma](w^{-1}(\{k\})) \\
  &= \Bal[\sigma]( \alpha_U (w)) \\
  &= \bigl(\Bal[\sigma]\circ \alpha_U\bigr)(w). \qedhere
\end{align*}
\end{proof}

\section{Pattern avoidance in Cayley permutations}\label{sec:pattsonCayley}

In this section, we will introduce pattern avoidance in the context of
Cayley permutations and ballots. In order to sensibly define pattern
avoidance for Cayley permutations we must restrict the domain to a
totally ordered set, which allows us to write Cayley permutations 
in one-line notation. One consequence of this is that all species of 
pattern-avoiding Cayley permutations are $\LL$-species. Furthermore, since 
there is a unique order preserving
bijection between any pair of finite totally ordered sets of the same
cardinality we may, when convenient, assume that the underlying total
order is $[n]$ with the standard order.

Consider the Cayley permutations
$w = w_1 w_2 \ldots w_n \in \Cay[n] $ and
$p = p_1 p_2 \ldots p_k \in \Cay[k]$. We say that $w$
\emph{contains} $p$ if there exists a subsequence
$w_{i_1}w_{i_2} \ldots w_{i_k}$ with $i_1 \leq i_2 \leq \ldots \leq i_k$
that is order isomorphic to $p$
(i.e., for every $a<b$ we have $w_{i_a}<w_{i_b}\Leftrightarrow p_a<p_b$ and
$w_{i_a}=w_{i_b} \Leftrightarrow p_a=p_b$).
If $w$ does not contain a pattern $p$, then we say that
$w$ \emph{avoids} $p$.

Let $\Cay(p)$ be the $\LL$-species of $p$-avoiding Cayley permutations.
The corresponding structures are
\[
\Cay(p)[n] = \{ w\in \Cay[n] : w \text{ avoids } p \}.
\]
The transport of structure is inherited from $\Cay$.
We also define $\Bal(p)[n]$ to be the
image of $\Cay(p)[n]$ under the natural bijection between
Cayley permutations and ballots described earlier.
It follows that $\Cay(p) = \Bal(p)$ as $\LL$-species.

\begin{example}\label{ex:1^k}
Consider $\Cay(1^k)$ for $k\geq 2$, where 
$1^k = 11 \ldots 1$ consists of $k$ copies of $1$.
A $1^k$-avoiding ballot has blocks of sizes at most $k-1$. That is,
blocks may be any size between $1$ and $k-1$, and hence
\[
  \Cay(1^k)=\Bal(1^k) = L(E_1 + E_2 + \cdots + E_{k-1}).
\]
It follows that
\begin{align*}
\Cay(1^k)(x)
  &= L(E_1 + \cdots + E_{k-1})(x) \\
  &= L(E_1(x) + \cdots + E_{k-1}(x))
  = \left(1 - \sum_{i=1}^{k-1} \frac{x^i}{i!}\right)^{-1}.
\end{align*}
\end{example}

We now extend the idea of Wilf equivalence~\cite{permpatternsbasicdefinitions}
to Cayley permutations.
We say that two patterns $p$ and $q$ are \emph{(Wilf) Cayley-equivalent}
if $|\Cay(p)[n]|=|\Cay(q)[n]|$ for all $n$; equivalently, if
$\Cay(p)=\Cay(q)$ as $\LL$-species.

The standard reverse and complement maps on permutations are easily generalized to
Cayley permutations.
Given a Cayley permutation $w=w_1\ldots w_n$, the \emph{reverse map}
is defined via
\[
r(w_1\ldots w_n) = w_n\ldots w_1
\]
and the \emph{complement map} is given by
\[
c(w_1 \ldots w_n) = (\max(w)+1-w_1)\ldots (\max(w)+1-w_n),
\]
where $\max(w) = \max\{w_i: i\in[n]\}$.
Certainly, the reverse and complement maps are bijections on $\Cay[n]$.
This implies that $p$, $r(p)$, $c(p)$, and $(r\circ c)(p)$ are all
Cayley-equivalent for a pattern~$p$. 
The class generated by reverse
and complement of a single pattern~$p$ is the \emph{symmetry class} of~$p$.
Note that two patterns may be Cayley-equivalent without being in the
same symmetry class.
Note also that there is no counterpart to the inverse map on $\Cay[n]$,
a reason for which Cayley-equivalence does not correspond to the
usual Wilf equivalence on permutations.
For instance, let $p=1342$ and $q=1423$. Then~$p$ and~$q$
are Wilf equivalent on permutations since $p=q^{-1}$.
On the other hand, $p$ and~$q$ are not Cayley-equivalent:
$$
|\Cay(1342)[7]|=33712\neq 33710=|\Cay(1423)[7]|.
$$
We will further explore this and other notions of equivalence
in Section~\ref{section: wilfeq}.

\section{Patterns of length two}\label{sec: length two}

In this section, we will investigate Cayley permutations avoiding
patterns of length two and we start with the pattern $11$. A Cayley
permutation avoiding $11$ is simply a permutation. Or, in terms of
ballots, a $\Bal(11)$-structure is a ballot with singleton blocks
only. We also note that the pattern $11$ is a special case of
the pattern $1^k$ studied in Example~\ref{ex:1^k}, and that
$\Cay(11) = L(E_1) = L$ and $\Cay(11)(x)=1/(1-x)$.

Next we consider the species $\Cay(21)=\Bal(21)$. Note that one could easily obtain the 
counting formula and corresponding generating series for $\Cay(12)$, 
but we have opted for a species-first approach, in part to illustrate the utility of 
combinatorial species.
A nonempty ballot on
$[n]$ avoids $21$ precisely when the element of its first block form an
initial segment $[k]$ of $[n]$ and the remaining blocks form a ballot
that also avoids $21$. Letting $F=\Bal(21)$ this characterization is
captured by the equation
\[
  F = 1 + E_+ \odot F = 1 + E * F.
\]
Using induction it immediately follows that
\begin{equation}\label{eq:Bal(21)}
  F = 1 + 1*\sum_{k\geq 1} E^{*k},
\end{equation}
in which $E^{*k} = E * \cdots * E$ is the convolution of $k$ copies of $E$.

\begin{lemma}
  For any $k\geq 1$, we have $E^{*k} = E_{k-1}E$.
\end{lemma}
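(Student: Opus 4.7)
The plan is to give a direct natural bijection after unfolding what $E^{*k}$ means. Since $F * G = F \odot X \odot G$ by definition, iterating gives
\[
E^{*k} \;=\; E \odot X \odot E \odot X \odot \cdots \odot X \odot E,
\]
with $k$ copies of $E$ alternating with $k-1$ copies of $X$. Therefore, an $E^{*k}$-structure on a totally ordered set $\ell$ is nothing but an ordinal decomposition
\[
\ell \;=\; A_1 \oplus \{s_1\} \oplus A_2 \oplus \{s_2\} \oplus \cdots \oplus \{s_{k-1}\} \oplus A_k,
\]
where each $A_i$ is a (possibly empty) interval of $\ell$ carrying its unique $E$-structure and each $s_j\in\ell$ is the singleton from the $j$th $X$-factor. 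By the nature of the ordinal sum, the $s_j$ automatically satisfy $s_1 < s_2 < \cdots < s_{k-1}$, and the $A_i$ are then forced: $A_i$ is the set of elements of $\ell$ strictly between $s_{i-1}$ and $s_i$, with $A_1$ consisting of the elements below $s_1$ and $A_k$ of those above $s_{k-1}$.

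Given this, I would define $\phi_\ell \colon E^{*k}[\ell] \to (E_{k-1}\cdot E)[\ell]$ by
\[
(A_1, s_1, A_2, \ldots, s_{k-1}, A_k)
\;\longmapsto\;
\bigl(\{s_1,\ldots,s_{k-1}\},\; A_1\cup\cdots\cup A_k\bigr).
\]
Recall that an $(E_{k-1}\cdot E)$-structure on $\ell$ is a choice of ordered pair $(S,T)$ with $S\sqcup T = \ell$ and $|S|=k-1$. The inverse of $\phi_\ell$ takes such a pair, writes $S = \{s_1 < \cdots < s_{k-1}\}$, and slices $T$ into the intervals between the $s_j$ as above; bijectivity is then immediate. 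Note that the base case $k=1$ works too: there are no separators, $E_0 = 1$, and both sides reduce to $E$.

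The final step is to check that $\{\phi_\ell\}$ commutes with the transport of $\LL$-structure along any order-preserving bijection $\sigma\colon \ell \to \ell'$. This is the only potentially delicate point, but it reduces to the fact that an order-preserving $\sigma$ carries the $j$th smallest element of any subset of $\ell$ to the $j$th smallest element of its image, combined with the observation that the transports for $E$, $E_{k-1}$, and $X$ all act by direct image. These together force $\phi_{\ell'}\circ E^{*k}[\sigma] = (E_{k-1}\cdot E)[\sigma]\circ \phi_\ell$, giving the required natural isomorphism. The main obstacle, to the extent there is one, is the bookkeeping on the $(2k-1)$-fold ordinal product; once the structure is recognized as being fully encoded by the $k-1$ separators, the rest is routine.
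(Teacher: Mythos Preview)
Your proof is correct and follows essentially the same approach as the paper: unfold $E^{*k}$ as the $(2k-1)$-fold ordinal product $E\odot X\odot\cdots\odot X\odot E$, then observe that such a structure on $\ell$ is completely determined by the $(k-1)$-element set $\{s_1,\ldots,s_{k-1}\}$ of separators together with its complement. Your write-up is more detailed (you handle general $k$, the base case, and naturality explicitly, whereas the paper illustrates with $k=3$ and leaves the rest to the reader), but the underlying bijection is identical.
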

\begin{proof}
  For ease of notation let $k=3$. 
  By definition of convolution,
  \[
    E^{*3} = E \odot X \odot E \odot X\odot  E.
  \]
  Consequently, an $E^{*3}$-structure on a totally ordered set $\ell$ is
  a 5-tuple
  \[
    \gamma = (\ell_1,x_1,\ell_2,x_2,\ell_3)
    \,\text{ such that }\,
    \ell = \ell_1\oplus\{x_1\}\oplus\ell_2\oplus\{x_2\}\oplus\ell_3.
  \]
  Clearly, $\gamma$ is uniquely determined by the pair
  $\bigl(\{x_1,x_2\},\ell_1\cup\ell_2\cup\ell_3\bigr)$,
  and this sets up a bijection proving $E^{*3} = E_{2}E$. The general
  case is analogous.
\end{proof}

Continuing with derivation~\eqref{eq:Bal(21)} prior to this lemma, we find that
\begin{align*}
  F
  &= 1 + 1*\sum_{k\geq 1} E^{*k} \\
  &= 1+ 1*E\sum_{k\geq 1} E_{k-1} = 1 + 1*E^2 = 1 + \int\!\! E^2.
\end{align*}

\begin{proposition}\label{prop: Cay(21) species}
  $\displaystyle\Cay(21) = 1 + \int\! E^2 = \Even{E} \cdot E$.
\end{proposition}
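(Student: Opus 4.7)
The first equality $\Cay(21) = 1 + \int E^2$ is effectively already in hand from the derivation immediately preceding the statement. Starting from $F = 1 + E_+ \odot F = 1 + E*F$, iteration together with the lemma $E^{*k} = E_{k-1}E$ yields $F = 1 + 1*E^2$, and by property (e) of the listed $\LL$-species operations, $1*E^2 = \int E^2$. So I would simply record this as the first equality with no new work required.

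For the second equality $1 + \int E^2 = \Even{E}\cdot E$, my first approach is via generating series. As emphasized earlier in Section~\ref{sec:Species}, two $\LL$-species are equal if and only if their exponential generating series coincide, so it suffices to check that both sides have the same series. A short computation gives
\[
  (\Even{E}\cdot E)(x) = \cosh(x)\, e^x = \frac{e^{2x}+1}{2},
  \qquad
  (1+\textstyle\int E^2)(x) = 1 + \int_0^x e^{2t}\,dt = \frac{e^{2x}+1}{2},
\]
and the identity follows at once.

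If I wanted a more structural proof in keeping with the combinatorial flavor of the paper, I would instead exhibit an explicit natural isomorphism. On a totally ordered set $\ell$, a $(1+\int E^2)$-structure is either the unique empty structure (when $\ell = \emptyset$) or a pair $(A,B)$ partitioning $\ell \setminus \{\min \ell\}$ with no parity constraint on $|A|$. I would send the empty case to the $(\Even{E}\cdot E)$-structure $(\emptyset,\emptyset)$, and for $\ell \neq \emptyset$ I would toss $\min \ell$ into whichever component makes the first coordinate have even size, namely $(A,\, B \cup \{\min \ell\})$ when $|A|$ is even and $(A \cup \{\min \ell\},\, B)$ when $|A|$ is odd. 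The inverse locates $\min \ell$ inside the received pair and removes it. The only nontrivial point here is naturality under order-preserving bijections, which is the main (very mild) obstacle: it holds automatically because the construction refers only to $\min \ell$ and the parity of $|A|$, both of which are preserved by any such bijection.
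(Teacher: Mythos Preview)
Your proof is correct and essentially matches the paper's. The paper also records the first equality as already established, then proves $1+\int E^2 = \Even{E}\cdot E$ by checking that both sides have the same value at $0$ and the same derivative (using $(\Even{E})' = \Odd{E}$), which is a trivial variant of your direct generating-series computation; your bijective alternative is literally the same map the paper gives.
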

\begin{proof}
  Let $F = 1 + \int\! E^2$ and $G=\Even{E} \cdot E$. We have
  already established that $\Cay(21)=F$. To show that
  $F=G$ we note that $F(0)=1=G(0)$, $F'=E^2$, and
  \[
    G'
    = \Even{E}' E + \Even{E} E'
    = \Odd{E} E + \Even{E} E = E^2.
  \]
  Alternatively, we may prove $F=G$ using an explicit bijection:
  Let $\ell$ be a finite totally ordered set. Define
  $\alpha_{\ell}: F[\ell]\to G[\ell]$ as follows. For $\ell=\emptyset$ let
  $\alpha_{\ell}(\emptyset) = (\emptyset,\emptyset)$. Assume $\ell$ is
  nonempty and let $x = \min(\ell)$. An $F$-structure on $\ell$ is
  a pair of disjoint sets $(S,T)$ such that
  $S \cup T = \ell \setminus \{x\}$ and we define
  \[
    \alpha_{\ell}(S,T) =
    \begin{cases}
      (S, T\cup \{x\}) & \text{if $|S|$ is even}, \\
      (S\cup \{x\}, T) & \text{if $|S|$ is odd}.
    \end{cases}
  \]
  It is easy to verify that $\alpha_{\ell}$ indeed is a bijection.
\end{proof}

\begin{corollary}\label{cor:egf of cay(12)}
  $\Cay(21)(x) = \cosh(x)e^x = \frac{1}{2}(e^{2x} + 1)$.
\end{corollary}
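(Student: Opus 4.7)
The plan is to apply the generating-series rule for the product of species to the identity $\Cay(21) = \Even{E}\cdot E$ established in Proposition~\ref{prop: Cay(21) species}. Since generating series are multiplicative over the $\LL$-species product, and since the generating series for the basic species $\Even{E}$ and $E$ have already been recorded in the introductory list as $\Even{E}(x) = \cosh(x)$ and $E(x) = e^x$, I would simply write
\[
  \Cay(21)(x) = \Even{E}(x)\cdot E(x) = \cosh(x)\, e^x.
\]

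To obtain the alternative closed form, I would substitute the exponential definition $\cosh(x) = \tfrac{1}{2}(e^x + e^{-x})$ and distribute:
\[
  \cosh(x)\, e^x = \tfrac{1}{2}\bigl(e^x + e^{-x}\bigr)e^x = \tfrac{1}{2}\bigl(e^{2x} + 1\bigr).
\]
That gives the second equality in the statement.

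There is really no obstacle here: the work has already been done in Proposition~\ref{prop: Cay(21) species}, and the corollary is a mechanical consequence of translating a species identity into generating series and invoking a standard hyperbolic identity. If desired, one could double-check by extracting coefficients: $\tfrac{1}{2}(e^{2x}+1)$ expands as $1 + \sum_{n\geq 1} 2^{n-1} x^n/n!$, which matches the expected count $|\Cay(21)[n]| = 2^{n-1}$ for $n \geq 1$, agreeing with the fact that a $21$-avoiding Cayley permutation on $[n]$ is determined by choosing, for each of the last $n-1$ positions, whether that element starts a new (necessarily larger) block or joins the current one.
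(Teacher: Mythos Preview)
Your proposal is correct and matches the paper's intent: the corollary is stated without proof, as it follows immediately from Proposition~\ref{prop: Cay(21) species} by passing to generating series and using $\Even{E}(x)=\cosh(x)$, $E(x)=e^x$. Your added coefficient check is a nice bonus but not required.
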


On extracting the coefficient of $x^n/n!$ in $\frac{1}{2}(e^{2x}+1)$ we
find that
\[
  \bigl|\Cay(21)[\emptyset]\bigr|=1\;\text{ and }\;
  \bigl|\Cay(21)[n]\bigr|=2^{n-1}\;\text{ for $n \geq 1$.}
\]
We could of course have derived this more directly by noting that any
$21$-avoiding Cayley permutation is weakly increasing and hence
consists of a segment of $1$s, followed by a segment of $2$s, followed
by a segment of $3$s, etc. Thus there are as many $21$-avoiding Cayley
permutations as there are integer compositions of $n$.

\section{Patterns of length three}\label{sec: length three}

Next we will study Cayley permutations that avoid patterns
of length three.
It follows from Example~\ref{ex:1^k} that
$\Cay(111) = L(E_1 + E_2)$ and
$\Cay(111)(x)= 2/(2-2x-x^2)$.
Using a partial fraction decomposition of $2/(2-2x-x^2)$ we may also
derive the following explicit counting formula for all $n\geq 0$:
  \[
    |\Cay(111)[n]|
    = n!\cdot\frac{(1+\sqrt{3})^{n+1} - (1-\sqrt{3})^{n+1}}{2^{n+1}\sqrt{3}}.
  \]

Although $112$ and $212$ are Cayley-equivalent as a consequence of the results
proved by Jel\'inek and Mansour on $k$-ary words~\cite{k-ary} (see also
Section~\ref{section: wilfeq}), we will prove it here independently by
showing that $\Cay(112) = \Cay(212)$ as $\LL$-species. We will first
provide a species equation for $\Cay(112)$.

\begin{proposition}\label{prop: species equation Cay(112)}
$\Cay(112)' = L \cdot \Cay(112) + L \cdot \Cay_+(112)$. 
\end{proposition}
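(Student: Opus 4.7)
The plan is to factor $\Cay(112)$ as a product $L\cdot F^*$ for an auxiliary species $F^*$ and then apply the Leibniz rule. I would work in the ballot picture, using $\Cay(p)=\Bal(p)$ as $\LL$-species; unwinding definitions, a ballot $B_1\cdots B_k$ avoids $112$ if and only if, for every non-singleton block $B_j$, the second smallest element of $B_j$ exceeds every element of the later blocks $B_{j+1},\ldots,B_k$.

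First I would introduce the subspecies $F^*\subseteq\Bal(112)$ consisting of ballots that are either empty or begin with a non-singleton block, and show $\Cay(112)=L\cdot F^*$. The bijection factors any $112$-avoiding ballot as its maximal initial run of singleton blocks (encoding a linear order on the union $A\subseteq\ell$ of those singletons) followed by the remainder, which is an $F^*$-structure on $\ell\setminus A$. Singleton blocks impose no avoidance constraint and prepending them to a ballot does not alter the maximum of the later blocks, so $112$-avoidance is preserved in both directions.

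Next I would establish the identity $(F^*)'=\Cay_+(112)$. With the convention $F'[\ell]=F[\ell\oplus 1]$, the distinguished element $\star$ is the maximum of $\ell\oplus 1$. Any $\varpi\in F^*[\ell\oplus 1]$ is nonempty and begins with a non-singleton block $B_1$, and the key observation is that $\star$ must lie in $B_1$: otherwise the second smallest of $B_1$ would have to exceed the global maximum $\star$, which is impossible. The maps that remove $\star$ from $B_1$ and, inversely, insert $\star$ into the first block of a nonempty $112$-avoiding ballot on $\ell$ are therefore mutually inverse, and a short check using the second-smallest condition shows each preserves $112$-avoidance. I expect this to be the main technical step.

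Finally, applying the Leibniz rule to $\Cay(112)=L\cdot F^*$, together with $L'=L^2$ from the earlier derivative example, gives
\[
\Cay(112)' \;=\; L'\cdot F^* + L\cdot(F^*)' \;=\; L\cdot(L\cdot F^*) + L\cdot\Cay_+(112) \;=\; L\cdot\Cay(112) + L\cdot\Cay_+(112),
\]
as required.
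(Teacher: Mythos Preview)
Your argument is correct. The characterization of $112$-avoidance via second-smallest elements is right, the factorization $\Cay(112)=L\cdot F^*$ is a genuine product-of-species identity (any subset $A$ may serve as the support of the initial singletons, since singleton blocks never participate in a $112$), and the key step $(F^*)'=\Cay_+(112)$ goes through exactly as you sketch: in $F^*[\ell\oplus 1]$ the first block is non-singleton, so its second smallest would have to exceed $\star$ were $\star$ in a later block, forcing $\star\in B_1$; deleting and reinserting $\star$ are easily checked to preserve avoidance.

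The paper takes a more direct route: it builds a single bijection from $\Bal(112)[n+1]$ to $(L\cdot\Bal(112)+L\cdot\Bal_+(112))[n]$ by locating the block $B_i$ containing $n+1$, observing that all earlier blocks are singletons, and then splitting according to whether $B_i\setminus\{n+1\}$ is empty. If you unwind your Leibniz decomposition, the resulting bijection is literally the same as the paper's---your case ``$\star$ lies among the initial singletons'' corresponds to their case $\tilde B_i=\emptyset$, and your case ``$\star\in B_1$ of the $F^*$-part'' corresponds to $\tilde B_i\neq\emptyset$. What your presentation buys is a cleaner species-algebraic structure: the identity $\Cay(112)=L\cdot F^*$ is of independent interest, and the proposition then drops out mechanically from the product rule rather than from an ad hoc case split. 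The paper's version is shorter and avoids introducing $F^*$, at the cost of hiding that structure.
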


\begin{proof}
  Let $n\geq 0$.  We shall verify the result in terms of ballots by
  exhibiting a bijection from $\Bal(112)'[n]=\Bal(112)[n+1]$ to
  $(L \cdot \Bal(112) + L \cdot \Bal_+(112))[n]$.  Consider a ballot
  $\varpi=B_1B_2\ldots B_k$ in $\Bal(112)[n+1]$.  For any $j\in [n+1]$,
  each block preceding the block containing $j$ contains at most one
  value less than $j$. In particular, if the maximum value $n+1$ belongs
  to the $i$th block, then all preceding blocks must
  be singletons, say $B_1=\{a_1\}$, $B_2=\{a_2\}$, \dots,
  $B_{i-1}=\{a_{i-1}\}$. Note that we may view $a_1a_2\dots a_{i-1}$ as
  an $L$-structure. Let $\tilde{B}_i = B_i\setminus \{n+1\}$.  It is easy
  to check that
  \[
    \varpi \,\longmapsto\,
    \begin{cases}
      \bigl(a_1a_2\dots a_{i-1}, B_{i+1}\dots B_k \bigr) & \text{if $\tilde{B}_i=\emptyset$},\\
      \bigl(a_1a_2\dots a_{i-1}, \tilde{B}_iB_{i+1}\dots B_k \bigr) & \text{otherwise}
    \end{cases}
  \]
  is the sought bijection.
\end{proof}

\begin{theorem}\label{thm:species 112}
  $\Cay(112) = \Alt'$.
\end{theorem}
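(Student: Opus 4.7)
The plan is to reduce the identity $\Cay(112) = \Alt'$ to an identity of exponential generating series, which is legitimate because we are working in the category of $\LL$-species (where equality of generating series characterizes isomorphism, as noted in the paper).

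First, I would convert the species equation
\[
\Cay(112)' \,=\, L \cdot \Cay(112) + L \cdot \Cay_+(112)
\]
from Proposition~\ref{prop: species equation Cay(112)} into an ordinary differential equation. Writing $F(x) = \Cay(112)(x)$ and using $L(x) = 1/(1-x)$ together with the relation $\Cay_+(112)(x) = F(x) - 1$, the species equation becomes the linear first-order ODE
\[
(1-x)\,F'(x) \,=\, 2F(x) - 1, \qquad F(0) = 1.
\]

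Next, I would solve this ODE by elementary means. The homogeneous equation $(1-x)y' = 2y$ has general solution $y = C/(1-x)^2$, and the constant function $1/2$ is a particular solution, so that matching the initial condition $F(0) = 1$ yields
\[
F(x) \,=\, \frac{1}{2}\left(\frac{1}{(1-x)^2} + 1\right).
\]

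Finally, I would compute $\Alt'(x)$ directly from Example~\ref{ex:Alt L-species}: differentiating $\Alt(x) = \tfrac{1}{2}\bigl(1/(1-x) + 1 + x\bigr)$ gives exactly $\tfrac{1}{2}\bigl(1/(1-x)^2 + 1\bigr)$, which matches $F(x)$. Since equal generating series imply equal $\LL$-species, this forces $\Cay(112) = \Alt'$. The only delicate step is the translation in the first paragraph, where one must correctly interpret $\Cay_+(112)$ as $\Cay(112) - 1$ at the level of series; once the ODE is set up, every subsequent step is routine.
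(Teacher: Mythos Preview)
Your proposal is correct and follows essentially the same approach as the paper: both arguments start from Proposition~\ref{prop: species equation Cay(112)}, reduce to the first-order equation $F' = L\cdot(2F-1)$ with $F(0)=1$, and exploit the fact that for $\LL$-species equality of generating series forces isomorphism. The only cosmetic difference is that the paper verifies directly that $\Alt'$ satisfies the species equation (using $\Alt'=(1+L^2)/2$ and $L'=L^2$), whereas you solve the ODE for $F(x)$ and then match it against the computed $\Alt'(x)$.
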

\begin{proof}
  Clearly, $\Cay(112)[\emptyset] = \Alt'[\emptyset]$ and hence it suffices
  to show that $F=\Alt'$ satisfies the differential equation
  $F' = L\cdot (F+F_+)=L\cdot (2F-1)$ from Proposition~\ref{prop: species equation Cay(112)}. 
  Recall from equation~\eqref{eq:Alt L-species} that
  $\Alt = (\Sym + 1 + X)/2 = (L + 1 + X)/2$ and hence $\Alt' = (1+L')/2$.
  Finally, using $L'=L^2$, we get
  $F'  = \Alt'' = L^3 = L\cdot (2F-1)$.
\end{proof}

By the preceding theorem we have $\Cay(112)=\Alt'=(1+L^2)/2$ and since
$L(x)=1/(1-x)$, we arrive at the following result.

\begin{corollary}\label{prop:egf cay(112)}
  $\displaystyle \Cay(112)(x)
  = \frac{1}{2}\left(1+\frac{1}{(1-x)^2}\right)
  = \frac{x^2-2x+2}{2(x-1)^2}$.
\end{corollary}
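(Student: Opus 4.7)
The plan is essentially a routine calculation riding on the two ingredients already in hand, namely Theorem~\ref{thm:species 112} which identifies $\Cay(112)$ with $\Alt'$, and equation~\eqref{eq:Alt L-species} which expresses $\Alt$ as $(\Sym + 1 + X)/2$. Since we are working with $\LL$-species, where $\Sym$ and $L$ coincide, I would first rewrite $\Alt = (L + 1 + X)/2$ so that the derivative can be taken by formulas already available in the paper.

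Next I would pass to generating series. By Theorem~\ref{thm:species 112}, $\Cay(112)(x) = \Alt'(x) = \tfrac{d}{dx}\Alt(x)$. Using $L(x) = 1/(1-x)$ and the identity $L' = L^2$ from the earlier derivative example, I would compute
\[
\Cay(112)(x) \,=\, \frac{1}{2}\bigl(L'(x) + 1'\bigr) \,=\, \frac{1}{2}\Bigl(\frac{1}{(1-x)^2} + 1\Bigr),
\]
which is exactly the first claimed form (the derivative of $X$ contributes the constant $1$, and the derivative of the constant species $1$ vanishes).

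For the second form it only remains to put the two fractions over a common denominator:
\[
\frac{1}{2}\Bigl(1 + \frac{1}{(1-x)^2}\Bigr)
\,=\, \frac{(1-x)^2 + 1}{2(1-x)^2}
\,=\, \frac{x^2 - 2x + 2}{2(x-1)^2}.
\]

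There is no real obstacle here; the only thing that requires a moment of care is keeping track of which species identity lives where. The crucial input $\Alt = (L+1+X)/2$ holds only at the level of $\LL$-species (as emphasized in Example~\ref{ex:Alt L-species}), but since $\Cay(112)$ is itself an $\LL$-species, this is exactly the right setting, and the derivation above is valid.
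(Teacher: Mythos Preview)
Your proof is correct and follows essentially the same route as the paper: invoke $\Cay(112)=\Alt'$ from Theorem~\ref{thm:species 112}, use $\Alt=(L+1+X)/2$ from equation~\eqref{eq:Alt L-species}, differentiate (with $L'=L^2$), and substitute $L(x)=1/(1-x)$. The only quibble is notational: in the display you write ``$1'$'' where you mean the constant~$1$ arising from $X'$, but your parenthetical remark makes clear you have the computation right.
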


We now transition to $212$-avoiding Cayley permutations.

\begin{proposition}\label{prop: species eq for Cay(212)}
  $\Cay(212) = 1 + E * \Cay(212) + E * \Cay(212)^{\bullet}$.
\end{proposition}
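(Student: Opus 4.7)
The plan is to mirror the proof of Proposition~\ref{prop: Sym convolution} by constructing, for each $n \geq 0$, a size-wise bijection from $\Cay(212)[n]$ to $(1 + E*\Cay(212) + E*\Cay(212)^\bullet)[n]$; since both sides are $\LL$-species, this suffices (the transport of structure is automatic). The empty Cayley permutation corresponds to the unique element of $1[\emptyset]$.

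For a nonempty $w \in \Cay(212)[n]$, the plan is to identify a canonical ``trivial prefix'' of length $k$, by analogy with the leading block of fixed points in Proposition~\ref{prop: Sym convolution}, producing a decomposition $[n] = [k] \oplus \{k+1\} \oplus \{k+2, \ldots, n\}$. The $E$-structure on $[k]$ records this prefix, the singleton at $k+1$ marks the first ``nontrivial'' position of $w$, and a $\Cay(212)$-structure on $\{k+2, \ldots, n\}$ captures the rank-reduced suffix --- pointed when an extra datum is required to recover the value $w_{k+1}$, and unpointed otherwise. Combined over all splits $k$, the contribution is $\sum_{k=0}^{n-1}(n-k)\cdot|\Cay(212)[n-k-1]| = \sum_{j=0}^{n-1}(j+1)\, c_j$, which should equal $c_n := |\Cay(212)[n]|$ via the induced recurrence $c_n = (n+1)\, c_{n-1}$ (for $n \ge 2$), consistent with the closed form $|\Cay(212)[n]| = (n+1)!/2$.

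The main obstacle is identifying the correct split feature and pointing rule so that the inverse reconstruction always yields a genuine $212$-avoiding Cayley permutation. The naive guess --- take $k$ to be the length of the leading run of $1$'s and point exactly when $w_{k+1}$ reappears in the suffix --- already fails a small-$n$ counting check, so a subtler invariant is needed; plausible refinements involve the first position where $w$ deviates from $12\ldots n$, or the location of the rightmost copy of the maximum value. The paper explicitly flags Corollary~\ref{cor:X} ($\AltC = E*\Alt^\bullet$) as preparation for this proposition; I expect it to enter as an intermediate identity reconciling an auxiliary counting in the bijection, paralleling the parity bookkeeping it provides for $\Sym = E + E*\Sym^\bullet$. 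Verifying $212$-avoidance of the reconstructed word under each case of the dichotomy is then the remaining technical step.
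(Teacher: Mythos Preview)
Your proposal is not a proof: you list candidate decompositions, concede that the first one fails, and leave the others undeveloped. The structural observation you are missing is that $212$-avoidance forces all copies of $m=\max(w)$ to be \emph{contiguous}---two copies of $m$ with a smaller letter between them is literally a $212$ pattern. Write $w = u\,m\cdots m\,v$ with $d\ge 1$ consecutive copies of $m$; then the concatenation $uv$ is again a $212$-avoiding Cayley permutation, now of length $n-d$. The bijection sends $w$ to $\bigl(\{1,\dots,d\},\,uv\bigr)$, unpointed when $v$ is empty and with the first letter of $v$ pointed otherwise; the pointed position records exactly where to reinsert the block of $m$'s, and reinsertion can never create a $212$ since the inserted letters are all equal to the new maximum. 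There is no ``trivial prefix'' in this argument: the $E_+$-structure on $\{1,\dots,d\}$ merely records the multiplicity of the maximum, and the ordinal split $[n]=[d]\oplus\{d+1,\dots,n\}$ carries no positional information from $w$. You did list ``the location of the rightmost copy of the maximum value'' among your refinements, so you were close, but the point is the whole contiguous block of maxima, not a single prefix cut.

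You are also mistaken about the role of Corollary~\ref{cor:X}. It is not used in the proof of this proposition at all; the bijection above is self-contained and involves no parity bookkeeping. Corollary~\ref{cor:X} enters only in the subsequent Theorem~\ref{thm:species 212}, where one verifies algebraically that $F=\Alt'$ satisfies the equation $F=1+E*F+E*F^\bullet$ established here.
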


\begin{proof}
  Let $n\geq 0$.
  For any species $F$ we have $E*F = E\odot X \odot F = E_+ \odot F$. It
  thus suffices to provide a bijection from
  $\Cay(212)[n]$ to
  \[
    \bigl(1 \,+\, E_+\odot\Cay(212)  \,+\, E_+\odot\Cay(212)^{\bullet}\bigr)[n]
  \]
  For $n=0$ we map the empty Cayley permutation to the empty set. Assume
  $n>0$ and $w\in\Cay(212)[n]$. Observe
  that the occurrences of the largest value $m=\max(w)$ in $w$ must
  occur as a single contiguous string. That is, we can write
  \[
    w \,=\, w_1\ldots w_i m \ldots m w_{i+d+1} \ldots w_n,
  \]
  where $d$ is the number of occurrences of $m$ in $w$ and both
  $w_1 \ldots w_i$ and $w_{i+d+1} \ldots w_n$ are 212-avoiding Cayley
  permutations in their own right. The size of the $E_+$-structure will
  be $d$ and its role is simply to keep track of the number of occurrences of
  $m$. We consider two cases according to whether or
  not the contiguous string $m \ldots m$ occurs on the far right. In the
  first case,
  \begin{align*}
    w &\;\longmapsto\;
    \bigl(\,\{1,\dots,d\},\, w_1\ldots w_i\,\bigr). \\
    \intertext{In the second case, we can identify a distinguished position
    immediately to the right of the contiguous string $m \ldots m$:}
    w &\;\longmapsto\;
    \bigl(\,\{1,\dots,d\},\, w_1\ldots w_i w_j^{\bullet}\ldots w_n\,\bigr).\qedhere
  \end{align*}
\end{proof}

Having established an equation characterizing $\Cay(212)$ we shall show
in Theorem~\ref{thm:species 212} that $\Cay(212) = \Alt'$
satisfies that equation, but first a simple lemma.

Recall that $\AltC=\Sym-\Alt$ denotes the species of odd permutations.

\begin{lemma}\label{lemma:odd perms}
  $\Alt = 1+X+\AltC$.
\end{lemma}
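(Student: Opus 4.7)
The plan is to combine two facts already in the text: the $\LL$-species identity $\Alt = \tfrac{1}{2}(\Sym + 1 + X)$ from equation \eqref{eq:Alt L-species}, and the defining decomposition $\Sym = \Alt + \AltC$ that was introduced just before Corollary~\ref{cor:X}. Substituting the decomposition into \eqref{eq:Alt L-species} gives $\Alt = \tfrac{1}{2}(\Alt + \AltC + 1 + X)$; multiplying through by $2$ and subtracting $\Alt$ from both sides yields $\Alt = 1 + X + \AltC$. Since equation~\eqref{eq:Alt L-species} has already been interpreted as a legitimate $\LL$-species identity (via the virtual species framework of Example~\ref{ex:Alt L-species}), the same remains true here.

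For readers who prefer a bijective argument, I would also sketch an involution on $\Sym$ that exchanges $\Alt$ and $\AltC$ on every set of size at least $2$: given a totally ordered set $\ell$, compose any $\pi \in \Sym[\ell]$ with the transposition swapping the two smallest elements of $\ell$. Because this operation uses only the order on $\ell$ it commutes with order-preserving transports, and it reverses parity; restricted to $\Alt[\ell]$ it bijects with $\AltC[\ell]$ when $|\ell| \geq 2$. The two exceptional cases account exactly for the $1$ and $X$ summands: if $|\ell|=0$, the empty permutation is the lone $\Alt$-structure, matching $1[\ell]$; if $|\ell|=1$, the identity is the lone $\Alt$-structure, matching $X[\ell]$. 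These two descriptions together furnish an isomorphism $\Alt \to 1 + X + \AltC$.

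No step presents any real obstacle; the lemma is essentially a one-line algebraic consequence of two identities already in hand, and it is presented here simply to package the needed reformulation cleanly for use in the forthcoming Theorem~\ref{thm:species 212}.
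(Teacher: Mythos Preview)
Your proposal is correct and matches the paper's own proof essentially verbatim: the paper also gives both the algebraic derivation from equation~\eqref{eq:Alt L-species} together with $\Sym=\Alt+\AltC$, and the bijective argument via multiplication by a fixed transposition. Your choice to specify the transposition as the swap of the two smallest elements is a nice touch for naturality, but otherwise the arguments coincide.
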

\begin{proof}
  This can be proved bijectively by fixing a transposition, say $\tau$, and
  observing that multiplication by $\tau$ reverses the
  parity. Alternatively, from equation~\eqref{eq:Alt L-species} we see
  that $2\Alt = 1 + X + \Sym = 1 + X + \Alt + \AltC$ and hence
  $\Alt = 1 + X + \AltC$.
\end{proof}

\begin{theorem}\label{thm:species 212}
  $\Cay(212) = \Alt'$.
\end{theorem}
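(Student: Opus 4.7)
The approach is to apply the same strategy used in the proof of Theorem~\ref{thm:species 112}: verify that $F = \Alt'$ satisfies the species equation given in Proposition~\ref{prop: species eq for Cay(212)}, namely $F = 1 + E*F + E*F^{\bullet}$. That equation determines $\Cay(212)$ uniquely as an $\LL$-species, since both $(E*F)[n]$ and $(E*F^{\bullet})[n]$ depend only on the values $F[k]$ with $k<n$. Working with $\LL$-species, equality reduces to equality of generating series, so the goal is to check the identity coefficient-wise.

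Set $G = 1 + E*\Alt' + E*(\Alt')^{\bullet}$. At $x=0$ the two convolutions vanish while $\Alt'(0) = |\Alt[\{\star\}]| = 1$, so $G(0) = 1 = \Alt'(0)$. I would then differentiate $G$ using the Leibniz rule: since $E(0) = 1$ and $E' = E$, one has $(E*H)' = H + E*H$ for any $\LL$-species $H$. Applied to both summands of $G$ and using $(\Alt')^{\bullet} = X \cdot \Alt''$, this gives
\[
G' \,=\, \Alt' + (\Alt')^{\bullet} + \bigl(E*\Alt' + E*(\Alt')^{\bullet}\bigr) \,=\, \Alt' + X\cdot\Alt'' + (G-1).
\]
Hence $G$ satisfies the linear first-order ODE $G' - G = \Alt' + X\cdot\Alt'' - 1$ with initial value $G(0) = 1$.

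By uniqueness of solutions of linear first-order ODEs, it suffices to verify that $\Alt'$ itself satisfies the same ODE, which amounts to the identity $(1-X)\Alt'' = 2\Alt' - 1$. From the proof of Theorem~\ref{thm:species 112} we already have $\Alt' = (1+L^2)/2$ and $\Alt'' = L^3$, so $(1-X)\Alt'' = L^2 = 2\Alt' - 1$, as required. Combined with the matching initial condition at $x=0$, this forces $G = \Alt'$, proving the theorem.

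The main obstacle is recognizing that the seemingly asymmetric equation of Proposition~\ref{prop: species eq for Cay(212)} linearizes cleanly: a single application of the Leibniz rule collapses the whole claim onto the very identity $(1-X)\Alt'' = 2\Alt' - 1$ that implicitly drives Theorem~\ref{thm:species 112}. A more combinatorial alternative would be a direct bijection between $\Cay(212)$ and $\Cay(112)$, but since $112$ and $212$ lie in different reverse/complement symmetry classes, any such bijection would have to be a genuine pattern-swap and is likely harder to produce directly.
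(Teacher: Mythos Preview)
Your argument is correct, and it takes a genuinely different route from the paper's proof. Both proofs start from Proposition~\ref{prop: species eq for Cay(212)} and verify that $\Alt'$ satisfies the equation $F = 1 + E*F + E*F^{\bullet}$, but the verification differs. The paper stays at the species level: it rewrites $E*\Alt' + E*(\Alt')^{\bullet} = E*(X\Alt')' = E*(\Alt^{\bullet})'$, then invokes the Leibniz rule together with Corollary~\ref{cor:X} ($\AltC = E*\Alt^{\bullet}$) and Lemma~\ref{lemma:odd perms} ($\Alt = 1 + X + \AltC$) to collapse the right-hand side directly to~$\Alt'$. Those two results were set up earlier in the paper essentially for this purpose. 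You instead differentiate once, obtain the linear ODE $G' - G = \Alt' + X\Alt'' - 1$, and check it against the explicit formula $\Alt' = (1+L^2)/2$ already used in Theorem~\ref{thm:species 112}. Your route is more analytic and sidesteps the parity-reversal machinery entirely; the paper's route is more combinatorial and explains \emph{why} the identity holds in terms of even/odd permutations. Either way the uniqueness argument (whether via the recursive structure of the convolution equation or via the first-order ODE with initial condition) is sound.
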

\begin{proof}
  If we let $F=\Cay(212)$, then Proposition~\ref{prop: species eq for Cay(212)} reads
  \begin{equation}\label{eq:Cay(212)}
    F = 1 + E * F + E * F^{\bullet}.
  \end{equation}
  We shall prove that $F=\Alt'$ satisfies this equation. Starting from
  the right-hand side we have
  \begin{align*}
    1 + E*\Alt' + E*(\Alt')^{\bullet}
    &= 1 + E*(\Alt' + X\Alt'')\\
    &= 1 + E*(X\Alt')' \\
    &= 1 + E*(\Alt^{\bullet})' \\
    &= (1 + X + E*\Alt^{\bullet})' && \text{(by the Leibniz rule)} \\
    &= (1 + X + \AltC)' && \text{(by Corollary~\ref{cor:X})} \\
    &= \Alt' && \text{(by Lemma~\ref{lemma:odd perms})},
  \end{align*}
  which concludes the proof.
\end{proof}

\begin{corollary}\label{prop:egf cay(212)}
  $\displaystyle \Cay(212)(x)
  = \displaystyle\frac{1}{2}\left(1+\frac{1}{(1-x)^2}\right)
  = \frac{x^2-2x+2}{2(x-1)^2}$.
\end{corollary}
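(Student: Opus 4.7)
The plan is straightforward: combine Theorem~\ref{thm:species 212} with the explicit $\LL$-species formula for $\Alt$ derived in Example~\ref{ex:Alt L-species}, then differentiate the resulting generating series.

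First, I would invoke Theorem~\ref{thm:species 212} to replace $\Cay(212)$ with $\Alt'$. Passing to generating series, this gives $\Cay(212)(x) = \frac{d}{dx}\Alt(x)$. Next, equation~\eqref{eq:Alt L-species} states that $\Alt = \frac{1}{2}(\Sym + 1 + X)$ as $\LL$-species, so
\[
\Alt(x) \,=\, \frac{1}{2}\!\left(\frac{1}{1-x} + 1 + x\right)\!,
\]
using $\Sym(x) = 1/(1-x)$ and the usual egfs for $1$ and $X$.

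Then I would differentiate termwise: $\frac{d}{dx}[1/(1-x)] = 1/(1-x)^2$, while $\frac{d}{dx}[1+x] = 1$. This yields
\[
\Cay(212)(x) \,=\, \frac{1}{2}\!\left(\frac{1}{(1-x)^2} + 1\right)\!,
\]
which is precisely the stated formula; combining the two fractions over the common denominator $2(x-1)^2$ gives the alternative form $(x^2-2x+2)/(2(x-1)^2)$.

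There is no real obstacle here, since all the hard work has already been done in establishing Theorem~\ref{thm:species 212} and the species identity for $\Alt$. In fact, this corollary is essentially identical in content to Corollary~\ref{prop:egf cay(112)} (both $\Cay(112)$ and $\Cay(212)$ equal $\Alt'$), which already foreshadows the Cayley-equivalence $\Cay(112) = \Cay(212)$ that was noted before Proposition~\ref{prop: species equation Cay(112)}. The only thing worth remarking on is that this proof simultaneously recovers that equivalence as $\LL$-species, not merely at the level of cardinalities.
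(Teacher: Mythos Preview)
Your proposal is correct and essentially follows the same route as the paper: in the analogous Corollary~\ref{prop:egf cay(112)} the paper uses $\Alt'=(1+L')/2=(1+L^2)/2$ (derived from equation~\eqref{eq:Alt L-species}) and then substitutes $L(x)=1/(1-x)$, which is the species-level version of the differentiation you carry out on generating series.
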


For $n\geq 1$, the number of alternating permutations of $[n]$ is $n!/2$ and since
$\Cay(112) = \Cay(212)=\Alt'$, by Theorems~\ref{thm:species 112} and~\ref{thm:species 212},
we immediately get the following counting formula for
$\Cay(112)$ and $\Cay(212)$.

\begin{corollary}\label{cor: counting 212}
  We have $|\Cay(112)[\emptyset]|=|\Cay(212)[\emptyset]|=1$ and, for $n \geq 1$,
  \[
    \bigl|\Cay(112)[n]\bigr| = \bigl|\Cay(212)[n]\bigr| =  \frac{1}{2}(n+1)!
  \]
\end{corollary}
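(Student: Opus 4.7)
The plan is to leverage the two preceding theorems, which established that $\Cay(112) = \Cay(212) = \Alt'$ as $\LL$-species, thereby reducing the counting problem to determining the cardinality $|\Alt'[n]|$ for each $n \geq 0$.

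First I would unpack the derivative: by the definition of the $\LL$-species derivative, $\Alt'[n] = \Alt[1 \oplus [n]]$, which as an $\LL$-structured set of cardinality $n+1$ carries exactly the same number of $\Alt$-structures as $[n+1]$. Hence $|\Alt'[n]| = |\Alt[n+1]|$, and the problem boils down to the well-known fact that the alternating group on $n+1$ letters has order $(n+1)!/2$ whenever $n+1 \geq 2$, i.e.\ whenever $n \geq 1$. For the base case $n = 0$, we have $|\Alt'[0]| = |\Alt[1]| = 1$, since the only permutation of a single-element set is the (even) identity. Combining these two cases yields exactly the claimed formula.

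As an alternative (or sanity check) I would derive the same numbers directly from the generating series produced in Corollaries~\ref{prop:egf cay(112)} and~\ref{prop:egf cay(212)}, namely $\tfrac{1}{2}\bigl(1 + 1/(1-x)^2\bigr)$. Expanding $1/(1-x)^2 = \sum_{n\geq 0}(n+1)x^n$ and then reading the coefficient of $x^n/n!$ gives $\tfrac{1}{2}(n+1)!$ for $n \geq 1$ and $1$ for $n=0$, in agreement with the species argument above.

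There is really no obstacle here: the substantive work was already carried out in Theorems~\ref{thm:species 112} and~\ref{thm:species 212}, together with the identity $\Alt = (\Sym + 1 + X)/2$ from equation~\eqref{eq:Alt L-species}. The only (very minor) care needed is to treat $n=0$ separately, since the formula $|\Alt[m]| = m!/2$ requires $m \geq 2$.
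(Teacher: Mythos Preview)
Your proposal is correct and follows essentially the same approach as the paper: the paper also simply invokes Theorems~\ref{thm:species 112} and~\ref{thm:species 212} to get $\Cay(112)=\Cay(212)=\Alt'$ and then uses that the alternating group on $[m]$ has order $m!/2$. Your treatment is in fact slightly more careful than the paper's, since you explicitly note that the formula $|\Alt[m]|=m!/2$ requires $m\ge 2$ and handle $n=0$ separately; your generating-series sanity check is a harmless addition.
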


Using the reverse and complement maps, we have the $\LL$-species identities
$$
\Cay(112)=\Cay(221)=\Cay(211)=\Cay(122)
\quad\text{and}\quad
\Cay(212)=\Cay(121).
$$
However, the species $\Cay(112)$ and $\Cay(212)$ are the 
same according to Theorems~\ref{thm:species 112} and~\ref{thm:species 212},
which implies that all six of the patterns are Cayley-equivalent.

Birmajer, Gil, Kenepp, and Weiner~\cite{Birmajer-etal} counted weak
orderings subject to various ``stopping conditions''. In particular,
they showed that the number of weak orderings of size $n\ge 1$ subject to
$x_{i_1}<x_{i_2}=x_{i+3}$ is $(n+1)!/2$. Translating this to our
terminology, they showed that $|\Cay(122)[n]|=(n+1)!/2$.
Now, the
reverse-complement of $122$ is $112$, and hence the formula
$|\Cay(112)[n]|=(n+1)!/2$ of Corollary~\ref{cor: counting 212} was
already known.

Let us now consider the species $\Cay(231)$. We wish to provide a geometric
decomposition of $231$-avoiding Cayley permutations that has the same flavor
as the well-known recursive description of $\Sym(231)$. Our technique is
similar to the one used in Proposition~\ref{prop: species eq for Cay(212)}
and it leads to an integral equation for $\Cay(231)$.

\begin{proposition}\label{prop: equation for cay231}
  $\Cay(231) = 1 + X + (4\Cay(231) - E)*\Cay(231)_+$.
\end{proposition}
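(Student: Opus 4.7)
My plan is to prove the identity by reducing it to the induced count on underlying sets of each cardinality, which is permissible because $\Cay(231)$ is an $\LL$-species, and $\LL$-species are determined by their generating series. The cases $n=0,1$ are immediate since the convolution vanishes there (its right factor requires a nonempty tail), so the substance is to establish
\[
c_n \;=\; \sum_{p=1}^{n-1}(4c_{p-1}-1)\,c_{n-p}\qquad (n\ge 2),
\]
where $c_j:=|\Cay(231)[j]|$.

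For $n\ge 2$, I would classify each $w\in\Cay(231)[n]$ by the position $p$ of its leftmost maximum $m=\max(w)$, writing $w=u\,m\,v$. The key structural observation is that, for $p<n$, $231$-avoidance forces every letter of $u$ to be $\le$ every letter of $v$ (otherwise $(u_i,m,v_j)$ would be a $231$-pattern). Together with the Cayley condition this pins down $u\in\Cay(231)[p-1]$ with value set $[k]$ (where $k=\max(u)$, or $k=0$ if $u=\emptyset$), while $v$ has values in $\{k,\ldots,m\}$ with $\{k+1,\ldots,m-1\}$ all present. Standardizing $v$ to a Cayley permutation $v'\in\Cay(231)_+[\{p+1,\ldots,n\}]$ with $l:=\max(v')\ge 1$, the pair $(u,v')$ together with the two bits recording whether $k\in v$ and whether $m\in v$ determines $w$ uniquely.

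The main obstacle is the case analysis for which of the four bit-combinations yield a valid $w$. All four work when $u\ne\emptyset$ and $l\ge 2$. When $u\ne\emptyset$ and $l=1$ only three work, because the combination ``$k\in v$ and $m\in v$'' would force $m=k+l-1=k$, contradicting the leftmost-maximum hypothesis. When $u=\emptyset$ only two work, because the combinations requiring $k\in v$ are impossible. Since the unique $v'$ on a tail of size $j$ with $l=1$ is $1^{j}$, and the case $p=n$ contributes $c_{n-1}$ further structures (each $u\in\Cay(231)[n-1]$ yielding $w=u\,m$ with $m=\max(u)+1$), summation gives
\[
c_n \;=\; c_{n-1}+2c_{n-1}+\sum_{p=2}^{n-1}c_{p-1}\bigl(3+4(c_{n-p}-1)\bigr) \;=\; 3c_{n-1}+\sum_{p=2}^{n-1}c_{p-1}(4c_{n-p}-1).
\]

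It remains to reconcile this with the stated right-hand side. The identity $\sum_{p=2}^{n-1}c_{p-1}(4c_{n-p}-1)=\sum_{p=2}^{n-1}(4c_{p-1}-1)c_{n-p}$ holds because both sides equal $4\sum_{p=2}^{n-1}c_{p-1}c_{n-p}-\sum_{i=1}^{n-2}c_i$, the subtracted sum being invariant under the reindexing $p\mapsto n-p+1$. Combined with $3c_{n-1}=(4c_0-1)c_{n-1}$, this yields $c_n=\sum_{p=1}^{n-1}(4c_{p-1}-1)c_{n-p}$, which is precisely $|((4\Cay(231)-E)*\Cay(231)_+)[n]|$, as required.
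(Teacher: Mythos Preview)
Your proof is correct and follows essentially the same approach as the paper's: both factor $w=u\,m\,v$ at the leftmost maximum and case-split on whether $\min(v)\in\{\max(u),\max(u)+1\}$ and whether $\max(w)\in\{\max(v),\max(v)+1\}$, with the degenerate sub-cases (constant $v$, empty $u$) handled identically. The paper carries this out at the species level, obtaining $F = 1 + \int F + 2\int F_+ + 3F_+*F_+ + F_+*(F-E)$ before simplifying, while you pass directly to coefficients; your final reindexing step is exactly the commutativity of the convolution $*$ that the paper uses in its simplification.
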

\begin{proof}
Any nonempty $w=w_1w_2\dots w_n\in\Cay(231)[n]$ factors as
$$
w \,=\, u \max(w) v,
$$
where
$w_i=\max(w)$ is the leftmost copy of $\max(w)$ in~$w$,
$u=w_1\ldots w_{i-1}$ is the prefix of $w$ preceding $w_i$, and
$v=w_{i+1}\ldots w_n$ is the suffix of $w$ succeeding $w_i$.
Note that
$\max(u)<\max(w)$ by our choice of $i\in [n]$. Note also that $\max(u)\le \min(v)$, or else
the triple $\max(u),\max(w),\min(v)$ would form an occurrence of $231$ in $w$.
Further, both~$u$ and~$v$ are (order isomorphic to) $231$-avoiding Cayley
permutations. Now, any $w\in\Cay(231)[n]$ falls into exactly one of the
following cases:
\begin{enumerate}
\item $w=\epsilon$, the empty Cayley permutation.
\item $v=\epsilon$. That is, there is only one copy of $\max(w)$ and it is
the last entry of~$w$. This includes the single letter Cayley permutation
$w=1$, where $u=v=\epsilon$.
\item $u=\epsilon$ and $v\neq \epsilon$. In this case, either
$\max(w)=\max(v)$ or $\max(w)=\max(v)+1$.
\item $u\neq\epsilon$ and $v\neq\epsilon$. Here, there are four sub-cases
to be considered, each giving rise to distinct Cayley permutations.
In the first three, $u$ and~$v$ are allowed to be (order isomorphic to)
any $231$-avoiding Cayley permutations, while the fourth case
needs some additional care.
\begin{enumerate}
\item $\min(v)=\max(u)+1$ and $\max(w)=\max(v)+1$;
\item $\min(v)=\max(u)+1$ and $\max(w)=\max(v)$;
\item $\min(v)=\max(u)$ and $\max(w)=\max(v)+1$;
\item $\min(v)=\max(u)$ and $\max(w)=\max(v)$; here, the case where
$\max(v)=\min(v)$ is forbidden since otherwise we would have
$\max(u)=\max(w)$, which is impossible.
\end{enumerate}
\end{enumerate}
Let $F=\Cay(231)$. The above case analysis leads to the following equation
\begin{equation}\label{eq: Cay(231) species}
F \;=\;
\underbrace{1}_1\;+\;
\overbrace{\int F}^2\;+\;
\overbrace{2\int F_+}^3\;+\;
\!\!\underbrace{3F_+*F_+}_{4\text{(a)}+4\text{(b)}+4\text{(c)}}\!\!+\;
\underbrace{F_+*(F-E)}_{4\text{(d)}},
\end{equation}
where we have annotated the terms with the corresponding cases.  Each
term can be straightforwardly derived using the techniques developed in this
article.  We omit most of the details and only consider the last
term, corresponding to case 4(d):
\[
  F_+*(F-E) = F_+\odot X\odot (F-E).
\]
Consider $w = u \max(w) v\in \Cay(231)[n]$ as above. In case 4(d), $u$
is nonempty, but otherwise unconstrained, which gives the $F_+$
factor. The singleton $\max(w)$ corresponds to the $X$ factor.  Further,
$v$ is nonempty and $\max(v)>\min(v)$, resulting in the $F-E$ factor. The reason for
subtracting $E$ from $F$ is that there is exactly one Cayley permutation
of fixed length and fixed underlying alphabet, where $\max(v)=\min(v)$,
namely the constant sequence. Finally, the total contribution
of 4(d) is obtained as the ordinal product $F_+\odot X\odot (F-E)$ of
the three factors, and on simplifying \eqref{eq: Cay(231) species} we obtain
the claimed equation.
\end{proof}

Recall that in Example~\ref{ex:catalan species} we explored the species
$F=\Sym(231)$ of $231$-avoiding permutations. It satisfies $F = 1 + F*F$
and by repeated differentiation we derived a familiar recursion for the
Catalan numbers. We can apply the same technique to
the species $F=\Cay(231)$ of $231$-avoiding Cayley permutations. Our
starting point is the equation $F = 1 + X + (4F-E)*F_+$ from the
preceding proposition. By repeated differentiation, and use of the
Leibniz rule, we find that
\[
  F^{(n+1)}(x) =
  F^{n+1}(x)*\bigl(4F(x)-E(x)\bigr) + \sum_{i=0}^{n-1}\bigl(4F^{i}(x)-E(x)\bigr)a_{n-i},
\]
where $a_n=F^n(0)=|\Cay(231)[n]|$ is the number of $231$-avoiding Cayley
permutations on $[n]$. Through identifying coefficients in the above identity
when $x=0$, we obtain the recurrence relation presented below.

\begin{proposition}\label{prop: Cay(231) recurrence}
  Let $a_n=|\Cay(231)[n]|$ be the number of $231$-avoiding Cayley
  permutations. Then $a_0=a_1=1$ and, for $n\geq 1$, we have
  \[
    a_{n+1}= \sum_{i=0}^{n-1}(4a_i-1)a_{n-i}.
  \]
\end{proposition}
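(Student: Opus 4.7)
The plan is to exploit the functional equation $F = 1 + X + (4F - E)*F_+$ for $F = \Cay(231)$ supplied by Proposition~\ref{prop: equation for cay231}. Setting $G = 4F - E$ for brevity, I note that $G^{(k)}(0) = 4a_k - 1$ since $E^{(k)}(0) = 1$ for every $k$, while $F_+(0) = 0$ and $F_+^{(j)}(0) = a_j$ for every $j \geq 1$. The basic strategy is to differentiate the equation repeatedly using the Leibniz rule and then evaluate at $x = 0$.

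For the base cases, I would use that $(P*Q)(0) = 0$ for any convolution, so evaluating the defining equation at $x=0$ immediately gives $a_0 = F(0) = 1$. Applying the Leibniz rule $(P*Q)' = P(0)Q + P'*Q$ once and then setting $x=0$ yields $a_1 = F'(0) = 1 + G(0) F_+(0) = 1$, again using $F_+(0) = 0$.

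For the recurrence proper, a straightforward induction on $n \geq 1$ based on the Leibniz rule gives the iterated formula
\[
  (G*F_+)^{(n)}(x) \;=\; \sum_{k=0}^{n-1} G^{(k)}(0)\, F_+^{(n-1-k)}(x) \;+\; (G^{(n)}*F_+)(x).
\]
Setting $x=0$ collapses the trailing convolution to $0$ and removes the $k = n-1$ summand since $F_+(0) = 0$, leaving $(G*F_+)^{(n)}(0) = \sum_{k=0}^{n-2}(4a_k-1)\,a_{n-1-k}$. Because $(1 + X)$ is annihilated by two derivatives, for $n \geq 2$ we have $a_n = F^{(n)}(0) = (G*F_+)^{(n)}(0)$, and the relabeling $n \mapsto n+1$ yields precisely the advertised identity.

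The only real obstacle is bookkeeping, namely tracking which terms vanish when evaluating at $x=0$: the convolution factor $G^{(n)}*F_+$ and the $F_+(0)$ contribution are exactly what shrink the naive upper summation limit from $n$ to $n-2$ (becoming $n-1$ after reindexing). Once these boundary effects are accounted for, the derivation is mechanical.
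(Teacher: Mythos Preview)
Your proposal is correct and follows essentially the same route as the paper: repeated differentiation of the identity $F = 1 + X + (4F-E)*F_+$ from Proposition~\ref{prop: equation for cay231} via the Leibniz rule, followed by evaluation at $x=0$. The only cosmetic difference is that you iterate the Leibniz rule on the factor $G=4F-E$ (extracting $G^{(k)}(0)=4a_k-1$), whereas the paper iterates on the factor $F_+$ (extracting $F_+^{(j)}(0)=a_j$); both unwind to the same recurrence.
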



In 1985 Simion and Schmidt~\cite{simionschmidt} gave a
bijection between $\Sym(123)[n]$ and $\Sym(132)[n]$.
It is considered a classic result
in the permutation patterns literature. Simion and Schmidt presented their
bijection as an algorithm, but it can also be stated in
terms of equivalence classes induced by positions and values of
left-to-right minima as follows~\cite{ClKi2008}. Given a permutation
$w=w_1\ldots w_n$, define
$$
\ltrmin(w) = \{ (i,w_i): w_j > w_i\text{ for every $j<i$}\}.
$$
Further, let the equivalence class of~$w$ be the set of permutations
$w'$ where $\ltrmin(w')=\ltrmin(w)$. Each equivalence class defined
this way contains exactly one permutation~$u$ avoiding $123$ and
one permutation~$v$ avoiding $132$; Simion-Schmidt's bijection
is the mapping $u\mapsto v$.
More explicitly, the entries of~$u$ and~$v$
are uniquely determined as follows. Let $w\in\Sym[n]$.
For $i=1,2,\ldots,n$, if $(i,a)\in \ltrmin(w)$, then let $u_i=v_i=a$. Otherwise:
\begin{itemize}
\item Let $u_i$ be equal to the largest letter in $[n]$ that has not been
used before.
\item Let $v_i$ be equal to the smallest letter in $[n]$ that has not been
used before and is greater than all the letters used thus far.
\end{itemize}
Picking the largest letter not used at every step guarantees that the
resulting permutation~$u$ avoids~$123$; indeed, a permutation avoids~$123$
if and only if the entries that are not left-to-right minima form a
decreasing sequence. Similarly, picking the smallest letter not used
ensures that~$v$ avoids~$132$. Finally, the additional constraint that
the letter picked at every step is greater than all the letters seen
before ensures that this construction gives the desired set of
left-to-right minima positions and values $\ltrmin(u)=\ltrmin(v)$.

As the reader may have guessed, we can modify the
Simion-Schmidt construction to obtain a bijection between $\Cay(123)[n]$
and $\Cay(132)[n]$ for $n\ge 0$.
Given $w\in\Cay[n]$, let
$$
\wltrmin(w) = \{ (i,w_i): w_j\ge w_i\text{ for every $j<i$}\}
$$
be the set of weak left-to-right minima positions and values of~$w$.
Further, let the \emph{filling} of $w$ be the multiset of its letters
that are not weak left-to-right minima
$$
\filling(w) = \{ w_i: (i,w_i)\notin \wltrmin(w)\}.
$$
Now, define the equivalence class of~$w$ as the set of Cayley
permutations $w'\in\Cay[n]$, where $\wltrmin(w')=\wltrmin(w)$ and
$\filling(w')=\filling(w)$. 
Once again, we shall see that each equivalence class defined this way contains
exactly one Cayley permutation $u\in\Cay(123)[n]$ and one Cayley permutation 
$v\in\Cay(132)[n]$, which are defined as follows.
Let $w\in\Cay[n]$.
For $i=1,2,\dots,n$, if~$(i,a)\in\wltrmin(w)$, then let $u_i=v_i=a$. Otherwise:
\begin{itemize}
\item Let $u_i$ be equal to the largest letter in $\filling(w)$ that has not been
used before.
\item Let $v_i$ be equal to the smallest letter in $\filling(w)$ that has not been
used before and is (strictly) greater than all the letters used thus far.
\end{itemize}
The additional requirement that $\filling(u)=\filling(v)$ is necessary as
otherwise, due to the possibility of having repeated entries, there could
be more than one $123$-avoiding Cayley permutation or more than one
$132$-avoiding Cayley permutation with the same positions and values of weak
left-to-right minima. To illustrate this construction, let
$$
w = \underline{7}\;\underline{7}\;9\;8\;\underline{5}\;9\;9\;\underline{5}\;6\;7\;\underline{4}\;\underline{1}\;2\;6\;3\;\underline{1}\;3\;3,
$$
where the underscored entries are the weak left-to-right minima of $w$. We have:
\begin{align*}
  \wltrmin(w) &= \{ (1,7),(2,7),(5,5),(8,5),(11,4),(12,1),(16,1) \};\\
  \filling(w) &= \{2,3,3,3,6,6,7,8,9,9,9\}.
\end{align*}
To obtain the only $123$-avoiding Cayley permutation~$u$ in the equivalence
class of~$w$, we keep the same positions and values of weak left-to-right minima
and arrange the letters of~$\filling(w)$ in weakly decreasing order:
$$
u=\underline{7}\;\underline{7}\;9\;9\;\underline{5}\;9\;8\;\underline{5}\;7\;6\;\underline{4}\;\underline{1}\;6\;3\;3\;\underline{1}\;3\;2.
$$
Finally, the corresponding $132$-avoiding Cayley permutation is
$$
v=\underline{7}\;\underline{7}\;8\;9\;\underline{5}\;6\;6\;\underline{5}\;7\;9\;\underline{4}\;\underline{1}\;2\;3\;3\;\underline{1}\;3\;9.
$$
To see that~$v$ avoids $132$, suppose for a contradiction that there are
three entries $v_i$, $v_j$, $v_k$ with $i<j<k$ and $v_i<v_k<v_j$.
Without loss of generality, we can assume that $(i,v_i)\in\wltrmin(v)$.
Note that $(j,v_j)$ and $(k,v_k)$ are not in $\ltrmin(v)$.
In particular, both~$v_j$ and~$v_k$ are in the filling of~$v$, which
contradicts $v_k<v_j$ since the procedure defined above
should pick the smaller entry~$v_k$ before~$v_j$.

Using the reverse and complement maps, together with the extension
of the Simion-Schmidt bijection to Cayley permutations shown above,
it follows that all permutations in $\Sym[3]=\{123,321,312,213,132,231\}$ are
Cayley-equivalent. An alternative way to obtain this result
consists of combining results by Chen, Dai and Zhou~\cite{chen}
and Kasraoui~\cite{KASRAOUI} to
obtain the following ordinary generating series for any $p\in\Sym[3]$:

\begin{equation}\label{eq: OGF Cay(231)}
\sum_{n\ge 0}\bigl|\Cay(p)[n]\bigr|x^n
=\frac{1}{2} + \frac{1}{1+\sqrt{1-8x+8x^2}}.
\end{equation}

We also have the explicit formula~\cite[Proposition~1.1]{KASRAOUI}:
\begin{equation}\label{eq: formula for Cay(123)}
|\Cay(p)[n]|=\sum_{k=1}^n\sum_{j=1}^k(-1)^{k-j}\binom{k}{j}|[j]^n(p)|.
\end{equation}
The number of $p$-avoiding words over an alphabet of~$k$ letters
was determined by Burstein~\cite[Theorem~3.2]{Bur1998}:
\begin{equation}\label{eq: Burstein}
|[k]^n(p)| =2^{n-2(k-2)}\sum_{m=0}^{k-2}\sum_{j=m}^{k-2}C_j\binom{2(k-2-j)}{k-2-j}\binom{n+2m}{n},
\end{equation}
where $C_j$ is the $j$th Catalan number. Combining formulas \eqref{eq:
  formula for Cay(123)} and \eqref{eq: Burstein} leads to a rather
unwieldy quadruple sum for $|\Cay(p)[n]|$. Luckily there is a more
compact way of expressing the result. Birmajer et
al.~\cite{Birmajer-etal} enumerated weak orderings subject to the
``stopping condition'' $x_{i_1}<x_{i_2}<x_{i_3}$. In our terminology
they showed that
\begin{equation}\label{eq:Birmajer-etal}
  \bigl| \Cay(123)[n] \bigr| = \sum_{j=0}^n(-1)^j2^{n-j-1}\binom{n-j}{j}C_{n-j}
\end{equation}
for $n\geq 1$, where $C_n$ is the $n$th Catalan number.

The results from Sections~\ref{sec: length two} and~\ref{sec: length three}
are summarized in Table~\ref{tab: patterns}.

\begin{table}
\centering
\tabulinesep=4mm \setlength{\tabcolsep}{2.5mm}
\begin{tabu}{@{}c|c|c|c|c@{}}
\noalign{\hrule height 2pt}
Pattern & Species & Series  & Enumeration $(n\geq 1)$ & OEIS \\
\noalign{\hrule height 1pt}
11 & $L$ & $\displaystyle \frac{1}{1-x}$ & $n!$ & \href{https://oeis.org/A000142}{A000142} \\
\noalign{\hrule height 1pt}
12 & \multirow{2}{*}{$\Even{E}\cdot E$} & \multirow{2}{*}{$\displaystyle \frac{e^{2x} +1}{2}$}
    & \multirow{2}{*}{$2^{n-1}$ }
    & \multirow{2}{*}{\href{https://oeis.org/A011782}{A011782}} \\
\cline{1-1}
21 & & & & \\
\noalign{\hrule height 1.2pt}
111 & $L(E_1 + E_2)$  & $\displaystyle \frac{2}{2-2x-x^2}$ &
$\displaystyle n!\cdot\frac{(1+\sqrt{3})^{n+1} -
(1-\sqrt{3})^{n+1}}{2^{n+1}\sqrt{3}}$ & \href{https://oeis.org/A080599}{A080599}  \\
\noalign{\hrule height 1pt}
212 & \multirow{6}{*}{$\Alt'$} &
\multirow{6}{*}{$\displaystyle\frac{x^2-2x+2}{2(x-1)^2}$} &
\multirow{6}{*}{$\displaystyle\frac{(n+1)!}{2}$}
& \multirow{6}{*}{\href{https://oeis.org/A001710}{A001710}}\\
\cline{1-1}
121 &  & & & \\
\cline{1-1}
112 &  & & & \\
\cline{1-1}
211 & &  & & \\
\cline{1-1}
221 & &  & & \\
\cline{1-1}
122 & &  & & \\
\noalign{\hrule height 1pt}
123 & \multirow{6}{*}{} &
\multirow{6}{*}{} &
\multirow{6}{*}{$\displaystyle\sum_{j=0}^n(-1)^j2^{n-j-1}\binom{n-j}{j}C_{n-j}$}
& \multirow{6}{*}{\href{https://oeis.org/A226316}{A226316}}\\
\cline{1-1}
132 &  & & & \\
\cline{1-1}
213 &  & & & \\
\cline{1-1}
231 & & & & \\
\cline{1-1}
321 & &  & & \\
\cline{1-1}
312 & &  & & \\
\noalign{\hrule height 2pt}
\end{tabu}
\caption{\label{tab: patterns}
Results for patterns of lengths two and three.}
\end{table}

\section{Primitive Cayley permutations}\label{sec: primitive}

A Cayley permutation $w=w_1\ldots w_n$ over $[n]$ is said to be
\emph{primitive} if it is nonempty $(n \geq 1)$ and
$w_i \neq w_{i+1}$ for $1 \leq i \leq n-1$.
That is, a primitive Cayley permutation has no ``flat steps''.
Under the designation \emph{multipermutations}, primitive Cayley permutations 
have figured in the work of Lam and
Pylyavskyy~\cite{LP2007}, as well as in the work of
Marberg~\cite{Marberg2021}.
We let $\Prim$ denote the $\LL$-species of primitive Cayley permutations.

\begin{lemma}\label{lemma:prim}
  $\Cay = 1 + \int (E\cdot \Prim')$.
\end{lemma}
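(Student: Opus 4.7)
The plan is to construct an explicit natural bijection
$\alpha_\ell: \Cay[\ell] \to (1+\int(E\cdot\Prim'))[\ell]$ for each finite totally ordered set $\ell$. For $\ell=\emptyset$ both sides are singletons, so the bijection is forced. For nonempty $\ell$, let $m=\min(\ell)$ and, for $i\in\ell\setminus\{m\}$, write $i^-$ for the immediate predecessor of $i$ in $\ell$. Given $w\in\Cay[\ell]$, set
\[
S=\{\,i\in\ell\setminus\{m\} : w(i)=w(i^-)\,\},
\qquad
T=(\ell\setminus\{m\})\setminus S,
\]
and let $p$ be the restriction of $w$ to $\{m\}\cup T$, read in one-line notation with respect to the inherited order. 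The idea is that $S$ collects the interior positions of the maximal flat runs of $w$, while $\{m\}\cup T$ retains exactly one representative (the first) of each run. Since $m<t$ for every $t\in T$, the totally ordered set $\{m\}\cup T$ is canonically $1\oplus T$, and we would define $\alpha_\ell(w)=(S,p)\in E[S]\times\Prim'[T]\subseteq(E\cdot\Prim')[\ell\setminus\{m\}]$, giving the required $\int(E\cdot\Prim')$-structure on $\ell$.

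The first thing to check is that $p$ actually belongs to $\Prim'[T]$. Primitivity is immediate: two consecutive positions in $\{m\}\cup T$ carry distinct values, because whenever $w$ repeats a value at adjacent positions of $\ell$ the later one is routed into $S$. The Cayley condition $\Img(p)=[k]$ holds because every value assumed by $w$ must first appear at some run-start, and every run-start lies in $\{m\}\cup T$; thus $\Img(p)=\Img(w)$. The inverse map is transparent: given $(S,p)$, process the elements of $\ell$ in increasing order and set $w(i)=p(i)$ when $i\in\{m\}\cup T$ and $w(i)=w(i^-)$ when $i\in S$. The two procedures are manifestly mutually inverse, which establishes the bijection on each $\ell$.

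The remaining obligation is naturality: for any order-preserving bijection $\sigma:\ell\to\ell'$, the square from Figure~\ref{fig: isomorphic as species diagram} (with $\Cay$ and $1+\int(E\cdot\Prim')$ in place of $F$ and $G$) must commute. This step is routine but carries most of the bookkeeping: the ingredients $S$, $T$, and $p$ depend only on the order on $\ell$ and on the values of $w$, both of which are transported verbatim along $\sigma$. Since $\sigma$ preserves predecessors, the $S$ computed from $\Cay[\sigma](w)=w\circ\sigma^{-1}$ is $\sigma(S)$, and the analogous statement holds for $T$ and $p$. The main source of friction I anticipate is purely notational: consistently identifying the formal ``new minimum'' introduced by $\Prim'$ with the concrete element $m=\min(\ell)$, and tracking this identification through $\sigma$, so as not to lose a labeling when passing between the two sides of the isomorphism.
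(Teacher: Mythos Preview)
Your proof is correct and uses essentially the same bijection as the paper: the paper describes the map $(S,v)\mapsto w$ by writing the letters of $v$ on the positions in $\{1\}\cup T$ and filling the vacant positions in $S$ by duplicating the letter to the left, whereas you describe its inverse $w\mapsto(S,p)$ by letting $S$ collect the flat-step positions and $p$ record the run-starts. The only difference is cosmetic: the paper works directly over $[n]$ (which suffices for $\LL$-species) and omits the naturality check, while you carry out the argument over a general $\ell$ and sketch naturality explicitly.
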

\begin{proof}
  Let $F=1 + \int (E\cdot \Prim')$ and let $n$ be a nonnegative integer.
  We shall define a bijection $\alpha:F[n]\to\Cay[n]$. Since
  there is a unique $F$-structure on the empty set, which can be mapped
  to the empty Cayley permutation, we may assume that $n>0$.
  Now, an $F$-structure on $[n]$ is an
  $(E\cdot \Prim')$-structure on
  $[n]\setminus\{1\}=\{2,3,\dots,n\}$. Such a structure is a
  pair $(S, v)$, where $S$ is a subset of $\{2,3,\dots,n\}$ and $v$ is a
  primitive Cayley permutation on $1\oplus T = \{1\}\cup T$ with
  $T=\{2,3,\dots,n\}\setminus S$. Let $k$ be the number of letters of
  $v$ and write $v=v_1v_2\ldots v_k$. Then $\alpha(v)=w=w_1w_2\dots w_n$
  is the Cayley permutation obtained from filling $n$ slots as follows.
  Write down the letters $v_1$, $v_2$, \dots, $v_k$ of $v$ on the slots
  belonging to $T$. Note that $w_1 = v_1$. Moving from left to right,
  fill in the vacant slots by duplicating the letter to its left.  For
  instance, let $S=\{2,3,7\}$ and $v=325154$. We start by filling the
  slots $\{1,4,5,6,8,9\}$ with the letters of $v$, obtaining
  $3\,\_\,\_\,2\,5\,1\,\_\,5\,4$, then we fill in the vacant slots and
  arrive at $w=333251154$. It is easy to see how to reverse this process
  and thus $\alpha$ is a bijection.
\end{proof}

\begin{theorem}\label{thm:prim-cay}
  $\Prim' = \Cay^2$.
\end{theorem}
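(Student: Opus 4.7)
The plan is to differentiate Lemma~\ref{lemma:prim} and then identify $\Cay'$ in a second way so that the two $E$-factors can be cancelled. Applying the derivative to both sides of $\Cay = 1 + \int(E \cdot \Prim')$ and using that differentiation and integration are inverse operations on $\LL$-species gives
\[
    \Cay' = E \cdot \Prim'.
\]
So it suffices to show that $\Cay'$ also equals $E \cdot \Cay^2$, since then cancelling $E$ on both sides would yield the claim.

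To establish $\Cay' = E \cdot \Cay^2$, the cleanest route is via generating series: from $\Cay(x) = 1/(2-e^x)$, the quotient rule yields
\[
    \Cay'(x) \,=\, \frac{e^x}{(2-e^x)^2} \,=\, e^x \cdot \Cay(x)^2 \,=\, (E \cdot \Cay^2)(x),
\]
and since $\LL$-species are determined by their generating series this upgrades to the species identity $\Cay' = E \cdot \Cay^2$. A bijective argument is also available: interpreting a $\Cay'$-structure as a ballot on $\ell \sqcup \{\star\}$ via Proposition~\ref{prop: as species, cay=bal}, one isolates the block $B$ containing $\star$, records $B \setminus \{\star\}$ as the $E$-part, and records the two substrings of blocks occurring before and after $B$ as the two $\Cay$-parts.

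Combining the two identities yields $E \cdot \Prim' = E \cdot \Cay^2$. At the level of generating series this reads $e^x \Prim'(x) = e^x \Cay(x)^2$, and since $e^x$ is invertible in the ring of formal power series we can divide out to obtain $\Prim'(x) = \Cay(x)^2$; because equality of $\LL$-species is detected by the generating series, this forces $\Prim' = \Cay^2$. The main delicate point is precisely this cancellation step: bijectively it is not transparent, since the two $E$-factors encode genuinely different combinatorial data (the set of duplicated positions of a Cayley permutation on one side, versus the companions of the marked element within its ballot block on the other). Passing through generating series sidesteps that difficulty by exploiting the invertibility of $E(x) = e^x$.
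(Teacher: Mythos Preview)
Your proof is correct. Both you and the paper begin by differentiating Lemma~\ref{lemma:prim} to obtain $\Cay' = E\cdot\Prim'$, and then must cancel the $E$. The paper does this cancellation at the species level: it multiplies by the virtual species $E^{-1}$ and then evaluates $E^{-1}\cdot\Cay'$ structurally, using $\Cay=\Bal=L(E_+)$ together with the chain rule $(L(E_+))' = L^2(E_+)\cdot E_+'$ and $E_+'=E$, so that the $E$ and $E^{-1}$ visibly annihilate and one is left with $L^2(E_+)=\Bal^2=\Cay^2$. You instead pass to generating series, compute $\Cay'(x)=e^x/(2-e^x)^2$, and cancel the $e^x$ there, invoking the fact (stated in the paper) that equality of exponential generating series forces equality of $\LL$-species. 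Your bijective aside for $\Cay'=E\cdot\Cay^2$ is in fact exactly the combinatorial content of the paper's chain-rule step. The trade-off: the paper's route keeps everything at the species level and makes the cancellation structurally transparent, while your route is quicker and more elementary but leans on the generating-series characterisation of $\LL$-species to avoid dealing with $E^{-1}$ directly.
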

\begin{proof}
  Differentiating the equation given in Lemma~\ref{lemma:prim} yields
  $\Cay' = E\cdot \Prim'$. Next we solve for $\Prim'$ by multiplying by
  the inverse of $E$. It is the virtual species
  \[
    E^{-1} = (1+E_+)^{-1} = \sum_{k\geq 0} (-1)^kE_+^k.
  \]
  Thus, an $E^{-1}$-structure is a ballot such that if it has an even
  number of blocks it is considered positive, while if it has an odd
  number of blocks it is considered negative. Continuing with our
  derivation we get
  \begin{align*}
    \Prim'
    &= E^{-1}\cdot \Cay' \\
    &= E^{-1}\cdot \Bal' \\
    &= E^{-1}\cdot (L(E_+))' \\
    &= E^{-1}\cdot E_+'\cdot L^2(E_+) \\
    &= L^2(E_+) \\
    & = \Bal^2\\
    & = \Cay^2. \qedhere
  \end{align*}
\end{proof}

The OEIS entry for the coefficients of $\Cay^2(x)$ is A005649.

A consequence of Lemma~\ref{lemma:prim} is 
$\Cay'=E\cdot \Prim'$. Inspired by this identity, we more generally say
that if two species $F$ and $G$ are related by $F(0)=1$, $G(0)=0$, and
\[
  F' = E\cdot G',
\]
then $G$ is the species of \emph{primitive $F$-structures}. For $\LL$-species, this is equivalent to
\[
F=1+\int (E\cdot G').
\]
Examples include:
\begin{itemize}
\item Primitive $\Cay$ is $\int \Cay^2$ (above).
\item Primitive $\Par$ is $\int \Par$ (similar to above).
\item Primitive (modified) ascent sequences are nonempty (modified) ascent
sequences with no flat steps~\cite{modasc,primitiveascseq}.
\item Primitive $\Sym$ is $\int (\Der + \Der') = \int\Der + \Der_+$ as follows from
differentiating $\Sym=E\cdot \Der$, in which $\Der$ is the species of derangements. 
Alternatively, Claesson~\cite{Cl2022} proved that primitive permutations are
the integral of those avoiding the Hertzsprung pattern $12$.
\end{itemize}

The equation $F'=E\cdot G'$ yields the formula
\begin{align}
  |F[n]| &= \sum_{j=1}^{n}\binom{n-1}{j-1} |G[j]| &&\text{ for } n\geq 1, \label{eq: F from G}\\
  \intertext{where $|F[0]|=1$. Similarly, the equation $G'=E^{-1}\cdot F'$ yields the formula}
  |G[n]| &= \sum_{j=1}^{n}(-1)^{n-j}\binom{n-1}{j-1} |F[j]| &&\text{ for } n\geq 0. \label{eq: G from F}
\end{align}

Let $\Prim(p)$ be the species of primitive Cayley permutations avoiding
the pattern $p$. The proof of the following proposition is almost
identical to that of Lemma~\ref{lemma:prim} and we omit the
details. Note, however, that it is crucial for $p$ to be primitive.
Otherwise, an occurrence of~$p$ could be created in the process of
duplicating the letters of a $\Prim(p)'$-structure.

\begin{proposition}\label{prop: Prim(p)}
  If $p$ is a primitive Cayley permutation, then
  \begin{align*}
    \Cay(p) &= 1 + \int (E\cdot \Prim(p)') \\
    \shortintertext{and}
    \Prim(p) &= \int (E^{-1}\cdot \Cay(p)').
  \end{align*}
\end{proposition}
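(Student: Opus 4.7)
My plan is to mirror the proof of Lemma~\ref{lemma:prim} and show that the bijection $\alpha$ constructed there restricts to $p$-avoiding structures whenever $p$ is primitive. Recall that $\alpha$ sends a pair $(S,v)$, where $S\subseteq\{2,\ldots,n\}$ and $v\in\Prim[\{1\}\cup T]$ with $T=\{2,\ldots,n\}\setminus S$, to the Cayley permutation $w\in\Cay[n]$ obtained by placing the letters of $v$ at the positions $\{1\}\cup T$ and then filling each position of $S$ with the letter immediately to its left. Once we know $\alpha$ restricts to a bijection between pairs $(S,v)$ with $v\in\Prim(p)[\{1\}\cup T]$ and $w\in\Cay(p)[n]$, the first identity follows verbatim from the proof of Lemma~\ref{lemma:prim}.

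The key step is to establish that, under $\alpha$, $w$ avoids $p$ if and only if $v$ avoids $p$. The forward direction is immediate: the letters of $v$ appear as a subword of $w$ at the positions $\{1\}\cup T$, so any occurrence of $p$ in $v$ lifts directly to one in $w$. For the converse, suppose $w_{i_1}w_{i_2}\ldots w_{i_k}$ is an occurrence of $p$ in $w$, with $i_1<i_2<\cdots<i_k$. For each $j$, define $\phi(i_j)$ to be the largest position in $\{1\}\cup T$ that is at most $i_j$; by construction of $\alpha$, the letter $w_{i_j}$ equals the letter of $v$ at position $\phi(i_j)$. If we had $\phi(i_j)=\phi(i_{j+1})$ for some $j$, then $w_{i_j}=w_{i_{j+1}}$, forcing $p_j=p_{j+1}$ and contradicting primitivity of $p$. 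Hence $\phi(i_1)<\phi(i_2)<\cdots<\phi(i_k)$, and these positions witness an occurrence of $p$ in $v$.

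The second identity then follows formally. Differentiating the first one yields $\Cay(p)'=E\cdot\Prim(p)'$; multiplying both sides by the virtual species $E^{-1}=\sum_{k\geq 0}(-1)^kE_+^k$ gives $\Prim(p)'=E^{-1}\cdot\Cay(p)'$, and integrating (noting $\Prim(p)[\emptyset]=\emptyset$) produces $\Prim(p)=\int(E^{-1}\cdot\Cay(p)')$.

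The main obstacle is justifying the restriction of the bijection to pattern-avoiding structures, and that is precisely where primitivity of $p$ is used: it rules out the collision $\phi(i_j)=\phi(i_{j+1})$, which geometrically corresponds to an occurrence of $p$ straddling a flat step of $w$. Without primitivity such a straddling occurrence could exist in $w$ without descending to any occurrence in $v$, and the bijection would not restrict.
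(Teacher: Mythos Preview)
Your proof is correct and follows exactly the approach the paper indicates: it restricts the bijection $\alpha$ from Lemma~\ref{lemma:prim} to $p$-avoiding structures and pinpoints primitivity of $p$ as the hypothesis preventing a duplicated letter from creating a new occurrence of $p$. The paper omits these details, but your explicit argument with the map $\phi$ is precisely the verification it alludes to, and your derivation of the second identity by differentiating, multiplying by $E^{-1}$, and integrating mirrors the computation used in Theorem~\ref{thm:prim-cay}.
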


For instance, $\Cay(21)'=E^2$ by Proposition~\ref{prop: Cay(21) species}
and hence
\[
  \Prim(21) = \int (E^{-1}\cdot \Cay(21)') = \int E = E_+.
\]
This should come as no surprise since, for each $n \geq 1$, the only
primitive $21$-avoiding Cayley permutation on~$[n]$ is the identity
permutation $12\ldots n$.

For another instance, $212$ is a primitive pattern and
$\Cay(212) = \Alt'$ by Theorem~\ref{thm:species 212}. Thus
$\Prim(212) = \int (E^{-1}L^3)$ and the corresponding generating series is
\begin{align*}
  \Prim(212)(x)
  &= \int_0^x \frac{e^{-t}}{(1-t)^3}\,dt \\
  &= x + 2\cdot\frac{x^2}{2!} + 7\cdot\frac{x^3}{3!} + 32\cdot\frac{x^4}{4!}
    + 181\cdot\frac{x^5}{5!} + 1214\cdot \frac{x^6}{6!} + \cdots
\end{align*}
whose coefficients form sequence A000153 in the OEIS.

Since any permutation is a primitive Cayley permutation,
Proposition~\ref{prop: Prim(p)} applies whenever the pattern $p$ is a
permutation. Consider $p=231$, or any other permutation in $\Sym[3]$; they are all
Cayley-equivalent by the work in Section~\ref{sec: length three}.
To calculate
\begin{equation}\label{eq: Prim(231) from Cay(231)}
  \Prim(231)(x) = \int_0^xe^{-t}\Cay(231)(t)\,dt
\end{equation}
we would need to know $\Cay(231)(x)$, and it may be possible
to find an expression for $\Cay(231)(x)$ by solving the
equation in Proposition~\ref{prop: equation for cay231}, but we have
failed to do so.
It is, however, easy to calculate the numbers $\bigl|\Cay(231)[n]\bigr|$;
e.g., using formula~\eqref{eq:Birmajer-etal}. The result is sequence A226316:
\[
  1, 1, 3, 12, 56, 284, 1516, 8384, 47600, 275808, 1624352, 9694912, \ldots
\]
Using equation~\eqref{eq: G from F} we may calculate the corresponding
numbers $\bigl|\Prim(231)[n]\bigr|$ for primitive Cayley
permutations. They turn out to be
\[
  0,1,2,7,28,121,550,2591,12536,61921,310954,1582791, \ldots
\]
and match sequence A010683, whose ordinary generating series is the
square of that for the Schröder–Hipparchus numbers (A001003), shifted
by one. We have arrived at an educated guess:
\begin{equation}\label{guess}
  \sum_{n\geq 0}\bigl|\Prim(231)[n]\bigr| x^n \,=\,
  x\left( \frac{2}{1 + x + \sqrt{1 - 6x + x^2}} \right)^2.
\end{equation}
Can we prove this? Yes, but first we need to translate the
relationship~\eqref{eq: Prim(231) from Cay(231)} for the exponential
generating series to a relationship between ordinary generating
series. For this discussion, let $a_0, a_1, a_2, \dots$ be a sequence of
numbers and let $A(x)$ and $\hat{A}(x)$ be the corresponding exponential
and ordinary generating series. Furthermore, let $b_0, b_1, b_2, \dots$
be another sequence of numbers and define $B(x)$ and $\hat{B}(x)$
in the same manner. The integral ``transform'', $A(x)\mapsto \int_0^x A(t)\, dt$,
corresponds to a shift of the coefficients, $a_n\mapsto a_{n-1}$, which
in turn corresponds to $\hat{A}(x)\mapsto x\hat{A}(x)$. The derivative,
$A(x)\mapsto A'(x)$, gives a shift in the other direction,
$a_n\mapsto a_{n+1}$, which corresponds to
$\hat{A}(x)\mapsto (\hat{A}(x)-1)/x$. Finally, the so-called inverse
binomial transform, $a_n = \sum_{j=0}^n(-1)^{n-j}\binom{n}{j}b_k$, or
$B(x)=e^{-x}A(x)$, corresponds to
$\hat{A}(x)=\hat{B}\bigl(x/(1+x)\bigr)/(1+x)$.  Putting this all
together we can translate equation~\eqref{eq: Prim(231) from Cay(231)}
and the result is
\begin{align*}
  \sum_{n\geq 0}\bigl|\Prim(231)[n]\bigr| x^n
  &\,=\, \frac{x}{1+x}\left[\frac{1}{t}\left(\sum_{n\geq 0}\bigl|\Prim(231)[n]\bigr| t^n - 1\right) \right]_{t=\frac{x}{1+x}} \\
  &\,=\, \frac{1+x}{1+x+\sqrt{1-6x+x^2}} - \frac{1}{2},
\end{align*}
where we have used \eqref{eq: OGF Cay(231)} to
calculate the explicit expression. It is easy to verify that this answer
is consistent with our guess~\eqref{guess}.

Despite its simplicity, we could not find a bijective proof of the
species equality $\Prim'=\Cay^2$ of Theorem~\ref{thm:prim-cay}.
The interplay between $\Prim'$ and $\Cay$ is not limited to this
equality. Indeed, we also conjecture the following two equalities.

\begin{conjecture}
For all $n\geq 1$, we have
$$
\frac{1}{2}|\Prim'[n]|
\,=\, \frac{1}{n} \sum_{w\in\Cay[n]}\sum_{i=1}^n w_i
\,= \sum_{w\in\Cay[n]} \sum_{i\in\textsc{fix}(w)}i,
$$
where $\textsc{fix}(w)=\{ i: w_i=i\}$ is the set of fixed points of~$w$.
\end{conjecture}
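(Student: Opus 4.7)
The plan is to reduce the two right-hand quantities to a common expression via the $\BB$-species symmetry of $\Cay$, and then to identify that expression with $\tfrac{1}{2}|\Prim'[n]|$ using Theorem~\ref{thm:prim-cay}.

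For the equality between the middle and right expressions, I would exploit that $\Cay$ is a $\BB$-species: for any bijection $\sigma:[n]\to[n]$, the transport $\Cay[\sigma](w)=w\circ\sigma^{-1}$ sends a Cayley permutation with value $k$ at position $i$ to one with value $k$ at position $\sigma(i)$. Since $\sigma$ is arbitrary, the quantity
$$
N_k \,:=\, \bigl|\{w\in\Cay[n]:w_i=k\}\bigr|
$$
depends only on $k$ (and $n$), not on the position $i$. Therefore
$$
\frac{1}{n}\sum_{w\in\Cay[n]}\sum_{i=1}^n w_i
\,=\, \frac{1}{n}\sum_{i=1}^n\sum_{k=1}^n k\,N_k
\,=\, \sum_{k=1}^n k\,N_k,
$$
while swapping summations on the right-hand side yields
$$
\sum_{w\in\Cay[n]}\sum_{i\in\textsc{fix}(w)} i
\,=\, \sum_{i=1}^n i\cdot\bigl|\{w:w_i=i\}\bigr|
\,=\, \sum_{i=1}^n i\,N_i,
$$
which is the same sum after relabeling the dummy index. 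This settles $M(n)=R(n)$, and both equal $\sum_w w_1$ since $|\{w:w_1=k\}|=N_k$.

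For the first equality, I would invoke Theorem~\ref{thm:prim-cay} and reduce to showing $2\sum_w w_1=|\Cay^2[n]|$. Via the bijection $\alpha_{[n]}:\Cay[n]\to\Bal[n]$, the value $w_1$ coincides with the block index of $1$ in the ballot $\alpha(w)=B_1B_2\ldots B_m$; hence $\sum_w w_1$ counts pairs $(\varpi,k)$ with $\varpi\in\Bal[n]$ and $1\le k\le(\text{block index of }1\text{ in }\varpi)$. Splitting $\varpi$ immediately before the $k$-th block gives a bijection with triples $(A,v,w)$ where $A\subseteq[n]\setminus\{1\}$, $v=B_1\ldots B_{k-1}\in\Cay[A]$, and $w=B_k\ldots B_m\in\Cay_+[[n]\setminus A]$ (the $\Cay_+$ factor is automatic since $1\in[n]\setminus A$). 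Consequently,
$$
\sum_w w_1 \,=\, \sum_{\substack{A\subseteq[n]\\ 1\notin A}} \bigl|\Cay[A]\bigr|\cdot\bigl|\Cay[[n]\setminus A]\bigr|.
$$
The complementation $A\mapsto[n]\setminus A$ is an involution on $2^{[n]}$ that swaps the condition $1\notin A$ with $1\in A$ while preserving the product $|\Cay[A]|\cdot|\Cay[[n]\setminus A]|$, so this sum equals $\tfrac{1}{2}|\Cay^2[n]|=\tfrac{1}{2}|\Prim'[n]|$, as required.

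The only creative ingredient is the split-at-block bijection; once that is in hand, the $\BB$-species symmetry and the complementation involution are both routine. I do not anticipate a serious obstacle, since the conjecture's content, modulo Theorem~\ref{thm:prim-cay}, is essentially the observation that $\sum_w w_1$ and $|\Cay^2[n]|/2$ both count pairs of Cayley structures on complementary subsets of $[n]$ with $1$ lying in a designated part.
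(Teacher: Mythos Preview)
The paper does not prove this statement; it is presented as an open conjecture, and the authors explicitly say they could not even find a bijective proof of the underlying identity $\Prim'=\Cay^2$. So there is no ``paper's own proof'' to compare against.

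Your argument, however, looks correct and in fact resolves the conjecture. The equality of the two right-hand expressions via the $\BB$-species symmetry of $\Cay$ is clean: the transport of structure shows that $N_k=|\{w\in\Cay[n]:w_i=k\}|$ is independent of the position $i$, whence both expressions collapse to $\sum_k kN_k=\sum_{w}w_1$. For the first equality, your split-at-block bijection is the key step: identifying $w_1$ with the block index of $1$ in the associated ballot, and then splitting the ballot before its $k$th block (for each $k$ up to that index), yields precisely the pairs $(v,w)\in\Cay[A]\times\Cay[[n]\setminus A]$ with $1\notin A$. The complementation involution $A\mapsto[n]\setminus A$ then halves $|\Cay^2[n]|$, and Theorem~\ref{thm:prim-cay} finishes the job. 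One small point of presentation: when you write $v\in\Cay[A]$ and $w\in\Cay_+[[n]\setminus A]$ you are tacitly passing through $\Bal$ via Proposition~\ref{prop: as species, cay=bal}; this is harmless but worth making explicit. Otherwise I see no gap---this appears to be a genuine proof of a result the paper leaves open.
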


\section{Wilf equivalences}\label{section: wilfeq}

Recall from Section~\ref{sec:pattsonCayley} that two Cayley
permutations~$p$ and~$q$ are Cayley-equivalent if
$|\Cay(p)[n]|=|\Cay(q)[n]|$ for every $n\ge 0$. The following proposition 
summarizes the Cayley-equivalence results from Section~\ref{sec: length three}.

\begin{proposition}
Cayley-equivalence partitions patterns of length three into the following three classes:
\begin{itemize}
\item $\{111\}$;
\item $\{112,121,122,211,212,221\}$;
\item $\{123,132,213,231,312,321\}$.
\end{itemize}
\end{proposition}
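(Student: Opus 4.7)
The plan is to establish the stated partition in two stages: first confirm that each of the three listed classes consists of mutually Cayley-equivalent patterns (a repackaging of results from Section~\ref{sec: length three}), and then verify that the three classes are pairwise inequivalent by comparing enumeration sequences at a single small value of $n$.

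For the within-class equivalences, the singleton class $\{111\}$ is trivial. For the class $\{112,121,122,211,212,221\}$, I would combine Theorems~\ref{thm:species 112} and~\ref{thm:species 212}, which together give $\Cay(112) = \Cay(212) = \Alt'$ as $\LL$-species, with the reverse and complement symmetries noted after Corollary~\ref{cor: counting 212}. These symmetries supply the identities $\Cay(112)=\Cay(221)=\Cay(211)=\Cay(122)$ and $\Cay(212)=\Cay(121)$, and chaining them through $\Cay(112)=\Cay(212)$ connects all six patterns. For the class $\{123,132,213,231,312,321\}$, I would invoke the extended Simion--Schmidt construction developed in Section~\ref{sec: length three}, which yields a bijection $\Cay(123)[n]\to\Cay(132)[n]$ for every $n\geq 0$, and then close up under the reverse and complement symmetry orbits $\{123,321\}$ and $\{132,213,231,312\}$ to conclude that all six permutations in $\Sym[3]$ are Cayley-equivalent.

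For distinctness between classes, the main point is that a single value of $n$ suffices to separate the three sequences. At $n=3$ all three classes coincidentally yield $12$, so one must look at $n=4$. Using the closed forms collected in Table~\ref{tab: patterns}, one computes
\[
  |\Cay(111)[4]| = 4!\cdot\frac{(1+\sqrt{3})^{5}-(1-\sqrt{3})^{5}}{2^{5}\sqrt{3}} = 66,
  \qquad
  |\Cay(112)[4]| = \frac{5!}{2} = 60,
\]
and $|\Cay(123)[4]| = 56$ from the formula in~\eqref{eq:Birmajer-etal} (or equivalently from OEIS entry A226316). Since $66$, $60$, and $56$ are pairwise distinct, no pattern from one class can be Cayley-equivalent to any pattern from another class, which completes the partition.

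I do not anticipate a serious obstacle; the heavy lifting has been done in the preceding sections and only the enumerative separation at $n=4$ is genuinely new to this statement. The one mild subtlety is verifying that the single Simion--Schmidt-type bijection, together with the two reverse/complement orbits in $\Sym[3]$, really does link all six permutations of length three; but this is immediate once one notes that the bijection crosses from the orbit $\{123,321\}$ to the orbit $\{132,213,231,312\}$.
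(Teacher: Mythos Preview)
Your proposal is correct and follows the same approach as the paper: the proposition is stated there as a summary of the results already established in Section~\ref{sec: length three}, with no separate proof given. Your explicit verification of pairwise distinctness at $n=4$ is a welcome addition that the paper leaves implicit (relying on the distinct closed formulas and OEIS entries in Table~\ref{tab: patterns}), and your computations $66$, $60$, $56$ are all correct.
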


Jel\'inek and Mansour~\cite{k-ary} considered two alternative notions
of equivalence on $k$-ary words: strong (word) equivalence and word
equivalence. We define them below, together with their natural
counterparts on Cayley permutations. We also introduce an analogue
to Cayley-equivalence on $k$-ary words. We expand a bit on the
topic of Wilf equivalences on words and Cayley permutations by relating
some of the equivalences defined this way. Finally, we state some
open problems for future investigation.

Let the \emph{content} of a word be the multiset of its
letters. Further, denote by $\Cay^k(p)[n]$ the set of Cayley
permutations over~$[n]$ that avoid~$p$ and whose maximum value is
equal to~$k$. Two Cayley permutations~$p$ and~$q$ are
\begin{itemize}
\item \emph{strong-word-equivalent} ($p\sim_{sw} q$) if for
every $k,n$ there is a bijection between $[k]^n(p)$ and $[k]^n(q)$
that preserves the content;
\item \emph{word-equivalent} ($p\sim_{w} q$) if $|[k]^n(p)|=|[k]^n(q)|$
for every $k,n$;
\item \emph{endo-equivalent} ($p\sim_{e} q$) if
$|[n]^n(p)|=|[n]^n(q)|$ for every $n$,
\end{itemize}
where ``endo'' is short for endofunction. Similarly, $p$ and $q$ are
\begin{itemize}
\item \emph{strong-Cayley-equivalent} ($p\sim_{sc} q$) if for
every $k,n$ there is a bijection between $\Cay^k(p)[n]$ and $\Cay^k(q)[n]$
that preserves the content;
\item \emph{Cayley-max-equivalent} ($p\sim_{cm} q$) if
$|\Cay^k(p)[n]|=|\Cay^k(q)[n]|$ for every $k,n$;
\item \emph{Cayley-equivalent} ($p\sim_{c} q$) if
$|\Cay(p)[n]|=|\Cay(p)[n]|$ for every $n$.
\end{itemize}

Clearly, strong-word-equivalence implies word-equivalence, which in
turns implies endo-equivalence. Similarly, strong-Cayley-equivalence
implies Cayley-max-equival\-ence, which in turn implies Cayley-equivalence.
Furthermore, Kasraoui~\cite[Proposition~1.1]{KASRAOUI} showed
that if~$p$ and~$q$ are permutations and $p\sim_w q$, then~$p\sim_c q$.
We shall prove that strong-word-equivalence coincides with
strong-Cayley-equivalence, and that word-equivalence coincides with
Cayley-max-equivalence.
To do that, let us consider the following \emph{standardization} map.
Let $w\in [n]^n$ and let~$A$ be a subset of $[n]$ with $|A|=|\Img(w)|$.
Then $\std_A(w)$ is the word obtained by replacing each copy of
the $i$th smallest letter of~$w$ with the $i$th smallest element of~$A$.
As an example, let $w=337217813\in [9]^9$ and let $A=\{2,5,6,7,9\}$.
Then $\std_A(w)=667527926$.
By choosing $A=[k]$ with $k=|\Img(w)|$, we obtain a Cayley permutation
$\std_A(w)$ whose maximum value is equal to~$k$. Moreover,
the standardization map preserves pattern avoidance and containment;
more explicitly, for any pattern~$p$ we have that~$w$ contains~$p$
if and only if $\std_A(w)$ contains~$p$. 

\begin{lemma}\label{lemma: eqforwilfequiv}
For each Cayley permutation $p$ and for each $k,n\ge 0$, we have
\begin{align}
|[k]^n(p)| &= \sum_{i=0}^k \binom{k}{i}|\Cay^i(p)[n]|\label{eq1wilfeq}\\
\shortintertext{and}
|\Cay^k(p)[n]| &= \sum_{i=0}^k(-1)^{k-i}\binom{k}{i}|[i]^n(p)|.\label{eq2wilfeq}
\end{align}
\end{lemma}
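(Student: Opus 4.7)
The plan is to prove the first identity by partitioning $[k]^n(p)$ according to the image of each word, and then derive the second identity by binomial inversion.

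For the first identity, I would fix $p$, $k$, and $n$, and classify each word $w\in[k]^n(p)$ by the subset $A=\Img(w)\subseteq [k]$. For a fixed $A\subseteq[k]$ with $|A|=i$, the subset of words in $[k]^n(p)$ whose image is exactly $A$ is in bijection with $\Cay^i(p)[n]$: the map sending $w$ to $\std_{[i]}(w)$ (where we relabel the $j$th smallest letter of $A$ to $j$) produces a Cayley permutation on $[n]$ with maximum value exactly $i$, and by the property of $\std$ recalled just before the lemma, this relabeling preserves pattern avoidance and containment. Its inverse, which relabels the $j$th smallest letter of a given $v\in\Cay^i(p)[n]$ to the $j$th smallest element of $A$, shows this is a bijection. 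Summing over the $\binom{k}{i}$ choices of $A$ and then over $i$ from $0$ to $k$ yields identity~\eqref{eq1wilfeq}. (Note that the $i=0$ term accounts for the empty image, which only occurs when $n=0$.)

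For the second identity, I would apply the standard binomial inversion: if $(a_k)_{k\ge 0}$ and $(b_k)_{k\ge 0}$ are sequences satisfying $a_k=\sum_{i=0}^k\binom{k}{i}b_i$ for all $k\ge 0$, then $b_k=\sum_{i=0}^k(-1)^{k-i}\binom{k}{i}a_i$ for all $k\ge 0$. Fix $n$ and $p$, and set $a_k=|[k]^n(p)|$ and $b_k=|\Cay^k(p)[n]|$. Identity~\eqref{eq1wilfeq} is precisely the hypothesis of binomial inversion, so~\eqref{eq2wilfeq} follows immediately.

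There is no real obstacle here: the main point is to verify that the standardization map $\std_{[i]}$ restricts to a content-respecting bijection between words with a prescribed image and Cayley permutations with the corresponding maximum, and that the pattern-avoidance property is compatible with this relabeling. Both facts are spelled out in the paragraph preceding the lemma, so the proof reduces to a partition-by-image argument followed by routine binomial inversion.
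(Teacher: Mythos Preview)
Your proposal is correct and follows essentially the same approach as the paper: partition $[k]^n(p)$ by the image of each word, use the standardization map $\std_{[i]}$ (with inverse $\std_A$) to set up the bijection with $\Cay^i(p)[n]$, and then invert. The only cosmetic difference is that the paper phrases the inversion via exponential generating series, writing \eqref{eq1wilfeq} as $B(x)=e^xA(x)$ and hence $A(x)=e^{-x}B(x)$, which is just binomial inversion in another guise.
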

\begin{proof}
Let $n,k\ge 0$. For any given $i$, the map
$w\mapsto\bigl(\Img(w),\std_{[i]}(w)\bigr)$ is a bijection
\[
  \bigl\{ w\in [k]^n(p): |\Img(w)|=i \bigr\}
  \;\longrightarrow\;
  \binom{[k]}{i}\times \bigl\{w\in\Cay^i(p)[n]:\max(w)=i\bigr\}.
\]
Its inverse is given by $(A,v)\mapsto\std_A(v)$.
Thus,
\[
\bigl|[k]^n(p)\bigr|
\;=\; \sum_{i=0}^k \bigl|\bigl\{ w\in [k]^n(p): |\Img(w)|=i\bigr\}\bigr|
\;=\; \sum_{i=0}^k \binom{k}{i}\bigl|\Cay^i(p)[n]\bigr|,
\]
proving equation~\eqref{eq1wilfeq}. This relation between
the numbers $|\Cay^i(p)[n]|$ and $|[k]^n(p)|$ can easily be
inverted to obtain~\eqref{eq2wilfeq}. Indeed, letting
$A(x)=\sum_{k\ge 0}|\Cay^k(p)[n]|x^k/k!$ and
$B(x)=\sum_{k\ge 0}|[k]^n(p)|x^k/k!$, equation~\eqref{eq1wilfeq}
amounts to $B(x) = e^xA(x)$, or, equivalently, $A(x)=e^{-x}B(x)$, which gives
equation~\eqref{eq2wilfeq}.
\end{proof}

\begin{proposition}\label{prop: caywilfeq}
Let~$p$ and~$q$ be Cayley permutations. Then
$$
p\sim_w q\;\iff\;p\sim_{cm}q.
$$
\end{proposition}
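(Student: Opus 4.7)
The plan is to deduce the proposition directly from Lemma~\ref{lemma: eqforwilfequiv}, which already packages the nontrivial combinatorial content as the binomial transform pair relating $|[k]^n(p)|$ and $|\Cay^k(p)[n]|$. Since both quantities are determined from each other by an invertible linear transformation (for each fixed $n$, as $k$ varies), agreement for $p$ and $q$ on one side must force agreement on the other.

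For the forward direction, I would fix Cayley permutations $p, q$ with $p\sim_{cm}q$, so that $|\Cay^i(p)[n]|=|\Cay^i(q)[n]|$ for all $i,n\ge 0$. Substituting into the right-hand side of equation~\eqref{eq1wilfeq} of Lemma~\ref{lemma: eqforwilfequiv} applied to $p$ and to $q$ respectively, I obtain
\[
|[k]^n(p)|=\sum_{i=0}^k\binom{k}{i}|\Cay^i(p)[n]|=\sum_{i=0}^k\binom{k}{i}|\Cay^i(q)[n]|=|[k]^n(q)|
\]
for every $k,n\ge 0$, which is exactly $p\sim_w q$.

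For the reverse direction, assume $p\sim_w q$, so $|[i]^n(p)|=|[i]^n(q)|$ for every $i,n\ge 0$. Applying equation~\eqref{eq2wilfeq} of Lemma~\ref{lemma: eqforwilfequiv} to $p$ and $q$ gives
\[
|\Cay^k(p)[n]|=\sum_{i=0}^k(-1)^{k-i}\binom{k}{i}|[i]^n(p)|=\sum_{i=0}^k(-1)^{k-i}\binom{k}{i}|[i]^n(q)|=|\Cay^k(q)[n]|
\]
for all $k,n\ge 0$, which is $p\sim_{cm}q$.

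There is no genuine obstacle here once the lemma is in hand; the ``work'' is really the content-preserving standardization bijection already established in the proof of Lemma~\ref{lemma: eqforwilfequiv}. The only thing to be careful about is invoking both equations explicitly (rather than just citing invertibility of the binomial transform), so that the argument stays self-contained and symmetric between the two directions.
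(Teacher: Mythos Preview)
Your argument is correct and is essentially identical to the paper's own proof: both directions are obtained by plugging the assumed equalities into equations~\eqref{eq1wilfeq} and~\eqref{eq2wilfeq} of Lemma~\ref{lemma: eqforwilfequiv}. The only quibble is terminological: what you call the ``forward direction'' is actually the implication $\Leftarrow$ in the stated biconditional $p\sim_w q \iff p\sim_{cm} q$, and vice versa.
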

\begin{proof}
If $p\sim_w q$, then $|[i]^n(p)|=|[i]^n(q)|$ for every~$i$
and thus $p\sim_{cm} q$ by equation~\eqref{eq2wilfeq}.
Similarly, if $p\sim_{cm} q$, then $|\Cay^k(p)[n]|=|\Cay^k(q)[n]|$
for every~$k$, and $p\sim_{w} q$ by equation~\eqref{eq1wilfeq}.
\end{proof}

\begin{proposition}\label{prop: strongwilfeq}
Let~$p$ and~$q$ be Cayley permutations. Then
$$
p\sim_{sw} q\;\iff\;p\sim_{sc}q.
$$
\end{proposition}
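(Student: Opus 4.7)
The plan is to exploit the same standardization machinery used in the proof of Lemma~\ref{lemma: eqforwilfequiv}, namely the bijection
\[
  [k]^n(p) \;\longleftrightarrow\; \bigsqcup_{i=0}^k \binom{[k]}{i} \times \Cay^i(p)[n]
\]
given by $w\mapsto(\Img(w),\std_{[i]}(w))$ with inverse $(A,v)\mapsto\std_A(v)$, where $i=|\Img(w)|=|A|$. Two observations make this bijection useful for tracking content: $\std_A$ acts purely by relabeling letters (replacing the $j$th smallest letter by the $j$th smallest element of $A$), so the multiplicities of the letters of $\std_A(v)$ are exactly the multiplicities of the letters of $v$ transferred to $A$; in particular, the content of $\std_A(v)$ is determined by $A$ together with the content of $v$. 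Combined with the fact that $\std_A$ preserves pattern avoidance, this will let us transport content-preserving bijections between the two worlds.

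For the implication $p\sim_{sc} q \Rightarrow p\sim_{sw} q$, suppose we have content-preserving bijections $\phi_{i,n}:\Cay^i(p)[n]\to\Cay^i(q)[n]$ for all $i,n$. Define $\psi_{k,n}:[k]^n(p)\to[k]^n(q)$ by $\psi_{k,n}(w)=\std_A\bigl(\phi_{i,n}(\std_{[i]}(w))\bigr)$, where $A=\Img(w)$ and $i=|A|$. This is a bijection because it is a composition of bijections on each fiber indexed by $A\in\binom{[k]}{i}$. Content preservation follows because $\std_{[i]}(w)$ and $w$ have the same multiplicity sequence (with alphabets $[i]$ and $A$ respectively), $\phi_{i,n}$ preserves multiplicities over $[i]$, and finally $\std_A$ transports those multiplicities back to $A$.

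For the converse $p\sim_{sw} q \Rightarrow p\sim_{sc} q$, suppose $\psi_{k,n}:[k]^n(p)\to[k]^n(q)$ is content-preserving. Any map preserving the multiset of letters in particular preserves the underlying set of letters, i.e.\ the image. Hence $\psi_{k,n}$ maps $\{w\in[k]^n(p):\Img(w)=[k]\}$ bijectively to $\{w\in[k]^n(q):\Img(w)=[k]\}$; but these two sets are precisely $\Cay^k(p)[n]$ and $\Cay^k(q)[n]$. The restriction is automatically content-preserving, which gives the desired bijection.

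The main thing to be careful about is verifying that the construction in the forward direction really preserves content as a multiset of letters in the ambient alphabet $[k]$, not merely up to relabeling; this is handled by the explicit description of $\std_A$ as a letter-wise substitution, which commutes with taking multiplicities. Everything else is routine.
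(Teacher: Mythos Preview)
Your proof is correct and is essentially identical to the paper's own argument: both directions use the same constructions (restriction to words with image $[k]$ for $sw\Rightarrow sc$, and the conjugation $\std_A\circ\phi_{i,n}\circ\std_{[i]}$ for $sc\Rightarrow sw$), with you merely treating the two implications in the opposite order and spelling out the content-preservation step in slightly more detail.
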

\begin{proof}
Suppose first that $p\sim_{sw} q$. That is, for each $n,k$ there is
a content-preserving bijection
$$
f_{k,n}:\; [k]^n(p)\longmapsto [k]^n(q).
$$
Then, we obtain a content-preserving bijection $g_{k,n}$ between
$\Cay^k(p)[n]$ and $\Cay^k(q)[n]$ by simply letting $g_{k,n}$ be
the restriction of $f_{k,n}$ to the subset $\Cay^k(p)[n]$ of $[k]^n(p)$.
This proves $p\sim_{sc} q$.
To prove the converse implication, suppose that $p\sim_{sc} q$. Equivalently,
for each $n,k$ we have a content-preserving bijection
$$
g_{k,n}:\; \Cay^k(p)[n]\longmapsto \Cay^k(q)[n].
$$
Then we define a content-preserving bijection $f_{k,n}$ from $[k]^n(p)$
to $[k]^n(q)$ by letting
$$
f_{k,n}(w)=
\left(\std_{\Img(w)}\circ g_{j,n}\circ\std_{[j]}\right)(w),
$$
where $j=|\Img(w)|$.
In other words, $f_{k,n}(w)$ is obtained by first standardizing~$w$
under $\std_{[j]}$, then applying the suitable content-preserving
bijection $g_{j,n}$, and finally applying the inverse standardization
(i.e., $\std_{\Img(w)}$) to obtain a word that has the same content as~$w$.
Or, even less formally, by applying $g_{j,n}$ to~$w$ pretending that
the numbers that appear in $w$ are $1,2,\dots,j$.
It is easy to see that $f_{k,n}$ preserves the content, as well as the
avoidance of~$p$. The proof that $f_{k,n}$ is a bijection is left to the
reader. 
\end{proof}

We end this section with a list of open problems.
Let $p$ and $q$ be two Cayley permutations of the same length. Can we prove
or refute any of the following conjectures?

\begin{conjecture}\label{conj: caystrongeq}
If $p\sim_{cm} q$, then $p\sim_{sc} q$.
\end{conjecture}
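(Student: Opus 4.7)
The plan is to invoke Propositions~\ref{prop: caywilfeq} and~\ref{prop: strongwilfeq} to translate the conjecture into a statement about $k$-ary words: it suffices to prove that $p\sim_w q$ implies $p\sim_{sw} q$. Writing $[k]^n_c(p)$ for the set of $p$-avoiding words in $[k]^n$ with content $c=(c_1,\dots,c_k)$, the task becomes deducing $|[k]^n_c(p)|=|[k]^n_c(q)|$ for every content vector~$c$ from the weaker hypothesis $|[k]^n(p)|=|[k]^n(q)|$ for all $k,n$.

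Next I would peel off the support. For any word $w\in [k]^n(p)$ with $\mathrm{supp}(w)=S\subseteq[k]$, the standardization map $\std_{[|S|]}$ bijects it onto an element of $\Cay^{|S|}(p)[n]$ whose content equals $c$ restricted to its positive entries. Splitting by support and invoking this bijection, the conjecture is seen to be equivalent to the following refined claim: for every strict composition $c$ of $n$ of length $\ell$ (all parts positive), the count $|\Cay^c(p)[n]|$ of $p$-avoiding Cayley permutations with content exactly $c$ equals $|\Cay^c(q)[n]|$. Note that summing such content-refined counts over all strict compositions of $n$ of length $\ell$ recovers $|\Cay^\ell(p)[n]|$, which equals $|\Cay^\ell(q)[n]|$ by hypothesis; the conjecture is thus the assertion that this common total distributes identically across strict compositions.

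The hard part will be extracting content-specific information from a hypothesis that only records totals at each~$\ell$. The natural $S_\ell$-action on words by relabelling letters does not preserve $p$-avoidance, so an equivariance argument is unavailable, and I have not identified a canonical deletion operation that respects both $p$-avoidance and content simultaneously. My tentative approach is two-pronged. First, I would revisit the explicit bijections established earlier in the paper: the extended Simion--Schmidt bijection between $\Cay(123)$ and $\Cay(132)$ visibly preserves both $\wltrmin$ and the filling, hence the content, already establishing $\sim_{sc}$ on the class $\{123,132,213,231,312,321\}$; I would then attempt to upgrade the species identity $\Cay(112)=\Cay(212)=\Alt'$ to a content-preserving bijection, perhaps via a direct ballot-level construction that fixes the multiset of block sizes. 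Second, I would run a computer search at small~$n$ on length-four patterns that are Cayley-equivalent but potentially not strong-Cayley-equivalent, either to validate the conjecture in further cases or to produce a counterexample. A uniform proof, if one exists, will likely require exhibiting an insertion or deletion procedure that simultaneously preserves $p$-avoidance and a refinable content-revealing statistic, from which the refined equality could then be propagated inductively on $n$ or $\ell$.
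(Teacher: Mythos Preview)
This statement is a \emph{conjecture} in the paper, not a proved theorem. The paper does not give a proof; it only observes that the conjecture holds for patterns of length at most six, by combining Propositions~\ref{prop: caywilfeq} and~\ref{prop: strongwilfeq} with the result of Jel\'inek and Mansour that word-equivalence and strong-word-equivalence coincide up to length six.

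Your reduction is exactly the one the paper uses: via Propositions~\ref{prop: caywilfeq} and~\ref{prop: strongwilfeq}, the conjecture is equivalent to the assertion that $p\sim_w q$ implies $p\sim_{sw} q$ on $k$-ary words. That part is correct and matches the paper. But the remainder of your proposal is not a proof---you yourself flag the ``hard part'' as unresolved, and your two-pronged plan (upgrading specific bijections to be content-preserving, and running computer searches) is a research programme, not an argument. In particular, verifying content-preservation for the length-three bijections does not address the general conjecture, and the hoped-for insertion/deletion procedure is speculative. The honest status is that the general statement remains open, just as the paper presents it; your translation recovers the paper's partial result (length $\leq 6$) but does not go beyond it.
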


Jel\'inek and Mansour~\cite{k-ary} showed that strong-word-equivalence
and word-equivalence coincide on patterns of length at most six.
By Propositions~\ref{prop: caywilfeq} and~\ref{prop: strongwilfeq},
Conjecture~\ref{conj: caystrongeq} holds up to length six.

\begin{conjecture}\label{conj: cayleyeq}
If $p\sim_{c} q$, then $p\sim_{cm} q$.
\end{conjecture}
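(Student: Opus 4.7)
The plan is to attempt this conjecture in two stages. First, by Proposition~\ref{prop: caywilfeq} the conclusion $p\sim_{cm} q$ is equivalent to $p\sim_w q$, so it suffices to promote Cayley-equivalence to word-equivalence. I would therefore look for an identity of the form
\[
  |[k]^n(p)| \,=\, \sum_{N} \gamma_N^{k,n}\,|\Cay(p)[N]|,
\]
whose coefficients $\gamma_N^{k,n}$ depend only on $k$ and $n$ (not on $p$). If such universal coefficients exist, then $p\sim_c q$ would immediately yield $p\sim_w q$, and thus $p\sim_{cm} q$.

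The natural starting point is Lemma~\ref{lemma: eqforwilfequiv}, which expresses $|[k]^n(p)|$ as a linear combination of the finer counts $|\Cay^i(p)[n]|$, whereas Cayley-equivalence only sees the single linear combination $\sum_i |\Cay^i(p)[n]|=|\Cay(p)[n]|$. To decouple these, I would try a combinatorial construction that starts from a $p$-avoiding word in $[k]^n$ and builds a $p$-avoiding Cayley permutation of controlled larger size. A candidate is to prepend a strictly increasing prefix of length $k$ using all the values in $[k]$, producing a Cayley permutation of length $n+k$ whose first $k$ letters are $12\ldots k$; pattern avoidance is preserved up to occurrences straddling the prefix, which can be analysed by inclusion--exclusion over the set of prefix letters that actually participate in a copy of $p$. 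The goal of this analysis is to rewrite $|[k]^n(p)|$ purely in terms of the totals $|\Cay(p)[N]|$ for various $N\le n+k$, rather than the finer $|\Cay^i(p)[N]|$. Before committing to this route, I would verify numerically, for patterns of length four where $\sim_c$ and $\sim_{cm}$ classes have been tabulated, that some such $p$-independent formula is at least consistent with the data.

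The main obstacle will be precisely this universality: there is no a priori reason such a formula should exist, and in fact for general sequences the map $(|\Cay^i[n]|)_i\mapsto \sum_i |\Cay^i[n]|$ is far from injective. If no universal algebraic identity can be produced, the natural fallback is to construct, for each known Cayley-equivalence class, an explicit bijection $\Cay(p)[n]\to\Cay(q)[n]$ preserving $\max$. For the length-three patterns one could try to lift the species identities $\Cay(p)=\Alt'$ from Theorems~\ref{thm:species 112} and~\ref{thm:species 212} to bijections tracking the maximum value, perhaps by chasing the position of the largest letter through the recursive decompositions used in the proofs. The hard part is that the abstract species isomorphism $\Cay(p)\cong\Cay(q)$ carries no information about $\max$, so genuinely new combinatorial input is required; in my view this is the principal reason the conjecture is open, and any proof will either have to identify a hidden $p$-independent linear relation among the numbers $|\Cay(p)[N]|$ or produce such max-preserving bijections by hand.
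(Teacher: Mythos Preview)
This statement is a \emph{conjecture}, not a theorem; the paper does not prove it and offers no proof to compare against. Immediately after stating it, the authors even exhibit a candidate counterexample: the pair $p=13442$, $q=14233$ satisfies $p\not\sim_{cm} q$ (they differ already at $n=9$, $k=5$), while $|\Cay(p)[n]|=|\Cay(q)[n]|$ has been verified for all $n\le 9$. So the paper's own evidence leans, if anything, toward the conjecture being false.

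Your proposal is not a proof but a research outline, and you acknowledge as much. A few concrete remarks. Your first strategy --- seeking universal coefficients $\gamma_N^{k,n}$ with $|[k]^n(p)|=\sum_N \gamma_N^{k,n}|\Cay(p)[N]|$ --- cannot succeed if the conjecture is false, and the pair above is precisely the test case you should run: since $|[k]^n(p)|\neq |[k]^n(q)|$ for some $(k,n)$ with $n\le 9$, any such identity would force $|\Cay(p)[N]|\neq |\Cay(q)[N]|$ for some $N$ in the range you use, contradicting the numerical check. Your prepend-a-prefix idea does not obviously produce a $p$-avoider (occurrences straddling the prefix are genuine), and the inclusion--exclusion you sketch will reintroduce the finer counts $|\Cay^i(p)[N]|$ rather than eliminate them. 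Your fallback --- building $\max$-preserving bijections class by class --- is a reasonable program for the length-three patterns, but it proves nothing about the general conjecture. In short, neither route constitutes a proof, and the paper does not claim one exists.
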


The smallest candidate for a counterexample to Conjecture~\ref{conj: cayleyeq}
is the pair $p=13442$ and $q=14233$. Here,
$p\not\sim_{cm} q$ since e.g.,
$|\Cay^5(p)[9]|=742943\neq 742944=|\Cay^5(q)[9]|$. If the conjecture is true, then
$|\Cay(p)[n]|$ and $|\Cay(q)[n]|$ will differ for some $n$. We have, however, checked that
$|\Cay(p)[n]|=|\Cay(q)[n]|$ for $n\le 9$.

It is easy to see that if $\max(p)\neq\max(q)$, then $p\not\sim_{cm}q$.
Indeed, suppose that $\max(p)=m<\max(q)$, for $p,q\in\Cay[n]$. Then there
is only one Cayley permutation in $\Cay^m[n]$ that contains~$p$, namely~$p$
itself; on the other hand, no Cayley permutation in $\Cay^m[n]$
contains~$q$ since we assumed $\max(q)>m$. Thus,
$$
|\Cay^m(p)[n]| = |\Cay^m[n]|-1 < |\Cay^m[n]| = |\Cay^m(q)[n]|,
$$
showing that $p\not\sim_{cm}q$. Since $p\sim_{sc}q$ implies
$p\sim_{cm}q$, the same property holds for strong-Cayley-equivalence.
What about Cayley-equivalence?

\begin{conjecture}
  If $p\sim_c q$, then $\max(p)=\max(q)$.
\end{conjecture}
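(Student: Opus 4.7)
The plan is to suppose for contradiction that $p \sim_c q$ with $m_p := \max(p) < \max(q) =: m_q$, so $|p|=|q|=n$, and produce some $N$ with $|\Cay(p)[N]| \neq |\Cay(q)[N]|$. My first target is $N = n+1$. Any length-$(n+1)$ Cayley permutation $w$ containing $p$ must satisfy $\max(w) \in \{m_p, m_p+1\}$: a length-$n$ subsequence of $w$ order-isomorphic to $p$ uses exactly $m_p$ distinct values, and at most one value of $w$ is absent from it. This gives the decomposition $|T_p[n+1]| = |T_p^{m_p}[n+1]| + |T_p^{m_p+1}[n+1]|$, where $T_p^k[N]$ denotes $p$-containing Cayley permutations of length $N$ and maximum $k$.

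I would then establish the closed form $|T_p^{m_p}[n+1]| = \sum_{v=1}^{m_p}(n+1 - c_v^p) = m_p(n+1) - n$, where $c_v^p$ is the number of occurrences of $v$ in $p$. The argument is a direct insertion count: the $n+1$ positions for inserting a duplicate $v \in [m_p]$ into $p$ collapse into $n+1 - c_v^p$ equivalence classes (one per maximal $v$-run extended, plus fresh isolated copies at positions non-adjacent to any $v$-run), and different values produce disjoint outputs. The key feature is that this count depends only on $m_p$ and $n$. Combined with $|T_p[n+1]| = |T_q[n+1]|$, this forces
\[
|T_p^{m_p+1}[n+1]| - |T_q^{m_q+1}[n+1]| \;=\; (m_q-m_p)(n+1).
\]
A Scenario B insertion -- place a new value $v \in [m_p+1]$ at position $i \in [n+1]$, shifting the values of $p$ that are $\ge v$ upward -- surjects the $(m_p+1)(n+1)$ value-position pairs onto $T_p^{m_p+1}[n+1]$, giving $|T_p^{m_p+1}[n+1]| \le (m_p+1)(n+1)$; the analogous construction for $q$ with $v = m_q+1$ alone gives $|T_q^{m_q+1}[n+1]| \ge n+1$. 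Substituting reduces the displayed identity to $m_q \le 2m_p$, so the conjecture is established at $N = n+1$ whenever $m_q > 2m_p$.

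The remaining regime $m_p < m_q \le 2m_p$ requires pushing the analysis to larger $N$. Writing $U_p^N(i) := i^N - |[i]^N(p)|$ for the word-level deficiency polynomial, the inversion of Lemma~\ref{lemma: eqforwilfequiv} yields $|T_p[N]| = \sum_{i \ge m_p} A(N,i)\, U_p^N(i)$ with $A(N,i) := \sum_{k=i}^N (-1)^{k-i}\binom{k}{i}$. Because $U_p^N$ vanishes at $i = 0,1,\dots,m_p-1$, it factors as $U_p^N(i) = m_p!\binom{i}{m_p}\tilde U_p^N(i)$ with $\tilde U_p^N(m_p) > 0$ whenever $N \ge n$. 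A brief calculation yields the identity $\sum_{i} A(N,i)\binom{i}{m}\binom{i-m}{s} = \binom{m+s}{s}$ (valid for $m+s \le N$), which together with the Newton forward-difference expansion of $\tilde U_p^N$ at $i = m_p$ rewrites the Cayley-equivalence hypothesis as the infinite linear system
\[
\sum_{s \ge 0} \Delta^s \tilde U_p^N(m_p) \cdot \frac{(m_p+s)!}{s!} \;=\; \sum_{s \ge 0} \Delta^s \tilde U_q^N(m_q) \cdot \frac{(m_q+s)!}{s!} \qquad (\forall N \ge n).
\]

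The main obstacle I expect is ruling out a conspiratorial cancellation of these identities across all $N$ simultaneously. I would attack this asymptotically: the leading ($s = 0$) contribution $\tilde U_p^N(m_p)$ equals $1/m_p!$ times the number of words in $[m_p]^N$ containing $p$ and grows at exponential rate $m_p^N$, while $\tilde U_q^N(m_q)$ grows at the strictly larger rate $m_q^N$. If the higher-order corrections $\Delta^s \tilde U_p^N(m_p)$ for $s \ge 1$ can be bounded in terms of avoidance counts over genuinely smaller alphabets (whose exponential rates remain strictly below $m_p^N$), then for $N$ large the right-hand side outstrips the left, yielding the contradiction. Making this bound rigorous -- i.e.\ controlling the Newton differences $\Delta^s \tilde U_p^N(m_p)$ uniformly in $N$ -- is the crux of the argument.
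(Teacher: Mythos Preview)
This statement appears in the paper as an open \emph{conjecture}; the paper offers no proof, so there is nothing to compare your attempt against. Your argument at $N=n+1$ is correct and constitutes genuine partial progress: the insertion count $|T_p^{m_p}[n+1]|=m_p(n+1)-n$ and the bounds $|T_p^{m_p+1}[n+1]|\le (m_p+1)(n+1)$, $|T_q^{m_q+1}[n+1]|\ge n+1$ are valid, and together they dispose of the case $m_q>2m_p$. (You silently use $|p|=|q|$; this does follow from $p\sim_c q$, but it deserves a sentence.)

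The asymptotic plan for the regime $m_p<m_q\le 2m_p$, however, has a concrete flaw that goes beyond the gap you already flag. You identify the $s=0$ term $\tilde U_p^N(m_p)\sim m_p^N/m_p!$ as the ``leading'' contribution and hope that the forward differences $\Delta^s\tilde U_p^N(m_p)$ for $s\ge 1$ can be controlled by counts over \emph{smaller} alphabets. But $\Delta^s\tilde U_p^N(m_p)$ is built from the values $\tilde U_p^N(m_p),\tilde U_p^N(m_p+1),\ldots,\tilde U_p^N(m_p+s)$, which come from \emph{larger} alphabets; already
\[
\tilde U_p^N(m_p+1)=\frac{(m_p+1)^N-|[m_p+1]^N(p)|}{(m_p+1)!}\sim\frac{(m_p+1)^N}{(m_p+1)!}
\]
grows at a strictly larger exponential rate than the $s=0$ term, and hence so does $\Delta^1\tilde U_p^N(m_p)$. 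The identity $\sum_{s}\Delta^s\tilde U_p^N(m_p)\,(m_p+s)!/s!=|T_p[N]|$ therefore holds only by massive cancellation among individually enormous terms, and the comparison ``rate $m_p^N$ on the left versus rate $m_q^N$ on the right'' is not available. As written, the approach does not close the remaining case, and the conjecture stays open.
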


Finally, we conjecture that $\max(p)\le\max(q)$ turns into the opposite
inequality for the number of Cayley permutations avoiding the two patterns.

\begin{conjecture}
If $\max(p)\leq \max(q)$, then $|\Cay_n(p)| \geq |\Cay_n(q)|$.
\end{conjecture}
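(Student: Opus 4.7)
The plan is to attack this conjecture through an iterated local reduction on $\max(q)$. First, I would try to establish the following single-step claim: for every Cayley pattern $q$ of length $k$ with $\max(q) = m+1 \ge 2$, there exists a canonical pattern $q^\flat$ of length $k$ with $\max(q^\flat) = m$ such that $|\Cay_n(q)| \le |\Cay_n(q^\flat)|$ for all $n$. A natural candidate for $q^\flat$ is the \emph{value merge} of $q$: choose some $j \in [m]$, replace every occurrence of $j+1$ in $q$ by $j$, and shift every letter larger than $j+1$ down by one. I would try to prove the inequality by constructing an injection $\Cay_n(q) \hookrightarrow \Cay_n(q^\flat)$ via a companion split operation on Cayley permutations: given $w$ avoiding $q$, split one occurrence of a repeated letter in $w$ into two adjacent values (re-standardising to preserve the Cayley property) in a way that does not introduce an occurrence of $q^\flat$. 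The primitive--Cayley interplay of Section~\ref{sec: primitive}, in particular Lemma~\ref{lemma:prim} and Theorem~\ref{thm:prim-cay}, suggests that such merge/split correspondences are natural in this setting and may be adapted to track pattern avoidance.

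The second step would be to bridge from the single-step claim to the universally quantified conjecture. Given arbitrary $p, q$ of the same length with $\max(p) \le \max(q)$, iterated application of the single-step claim starting from $q$ only produces \emph{some} pattern of max equal to $\max(p)$, not necessarily $p$ itself. One therefore needs an auxiliary principle controlling $|\Cay_n(\cdot)|$ across all Cayley patterns of a fixed length sharing the same max --- essentially a sandwich inequality saying that the maximum of the avoidance counts at max $m+1$ is at most the minimum of the avoidance counts at max $m$. A natural preliminary target is the special case $p = 1^k$: the claim that $|\Cay_n(1^k)| \ge |\Cay_n(q)|$ for every length-$k$ Cayley pattern $q$. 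This case admits an explicit species description (Example~\ref{ex:1^k}) as ballots whose blocks have size at most $k-1$, making it more tractable and possibly suggesting the right invariant to track for the general statement.

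The main obstacle is precisely the universal quantification. Any reduction that only relates $q$ to \emph{some} $p'$ of smaller max falls short of proving $|\Cay_n(q)| \le |\Cay_n(p)|$ for the \emph{given} $p$; this essentially demands a strong principle saying that the avoidance count depends only on the pair (length, max), which already fails at length four (as evidenced by the counter-candidate to Conjecture~\ref{conj: cayleyeq}). Thus any complete proof will need either a finer pattern-statistic than $\max(q)$ governing the count, or a direct bijective comparison between $p$-avoiders and $q$-avoiders that does not pass through intermediate patterns --- perhaps by weighting pattern occurrences by the number of distinct values they realize and arguing that increasing this number makes occurrences strictly rarer in a quantitatively tractable way.
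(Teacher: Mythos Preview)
The statement you are attempting to prove is a \emph{conjecture} in the paper, listed among the open problems at the end of Section~\ref{section: wilfeq}; the paper offers no proof and does not claim one. There is therefore nothing to compare your proposal against.

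As for the proposal itself: it is not a proof but a strategy sketch, and you have already put your finger on the fatal gap. Your iterated value-merge can only carry $q$ down to \emph{some} pattern $q^{\flat\flat\cdots\flat}$ with the same max as $p$, and closing the argument then requires that $|\Cay_n(\cdot)|$ be constant across all length-$k$ patterns of a fixed max---which you correctly note is false in general. So the two-step plan as stated cannot succeed; the single-step claim, even if true, is simply not strong enough to imply the conjecture. Your final paragraph is the honest assessment: a genuine proof would need either a finer invariant than $\max$ or a direct comparison between $\Cay_n(p)$ and $\Cay_n(q)$ that does not factor through intermediate patterns. Neither ingredient is supplied, so what you have is a reasonable discussion of why the problem is hard, not a proof.

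One small correction: the candidate counterexample to Conjecture~\ref{conj: cayleyeq} that you cite involves the length-five patterns $13442$ and $14233$, not length-four patterns.
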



\bibliographystyle{plain}
\bibliography{references.bib}

\end{document}